\newtheorem{theorem}{Theorem}[section]
\newtheorem{lemma}[theorem]{Lemma}
\theoremstyle{definition}
\newtheorem{definition}[theorem]{Definition}
\newtheorem{example}[theorem]{Example}
\theoremstyle{remark}
\newtheorem{remark}[theorem]{Remark}
\newtheorem{corollary}[theorem]{Corollary}
\numberwithin{equation}{section}
\begin{document}

\title{Is it possible that the Goldbach's and Twins primes conjectures are true with an algebraic approach?}

\author{Juan Carlos Riano-Rojas}
\address{Department of Mathematics, Universidad Nacional de Colombia, Caldas, Manizales}
\curraddr{Departamento de Matemáticas y Estadística,
Universidad Nacional de Colombia, Caldas, Manizales.}
\email{jcrianoro@unal.edu.co}



\subjclass[2020]{Primary 11A41; Secondary 06A11}
%
%
\dedicatory{This paper is dedicated to my wife Elisabeth and my daugthers Tata and Jana, and in memory of my friend Omar }

\keywords{Algebra, Goldbach's conjecture, coprimes}

\begin{abstract}
In this paper, using an algebraic approach, it is intended to show that the Goldbach's and Twin primes conjectures are true, building, for each $m>2$, an isomorphism between posets. One of the posets is the set of coprimes less than $m$, while the other is endowed with an operation that grants it an abelian group structure. Special features of this operation are demonstrated in the document, which allow characterizing the even numbers, as if they were their fingerprint; furthermore, such an operation locates, in a natural way, the pairs that satisfy the conjecture. Moreover, an algorithm that generates pairs of numbers that satisfy the conjecture is presented, and examples of some of the beautiful symmetries of the orbits of cyclic subgroups are shown, for the proposed abelian group.
\end{abstract}

\maketitle

%

\section{Introducción}

Goldbach's conjecture has been considered one of the great problems, still open, of mathematics, according to G.H. Hardy, in 1921. The interest of scientifics on this conjecture can be verified by several investigations, which show results aimed at achieving boundaries of numbers that meet the conjecture. Other articles transform the problem, seeking to represent some class of numbers, trying to reduce the boundaries that guarantee these new representations; for instance, Sinisalo \cite{Sinisalo} reported a study checking this conjecture up to $4*10^{11}$ by the IBM 3083 mainframe with a vector processor; however, these authors did not develop an explicit proof. They only tested the conjecture. In \cite{Wu}, Wu J. Tried to give a more comprehensive treatment of Chen's double sieve and prove an upper boundary, sharper than $D(N) \leq 7.8209 \Theta(N)$.\\

In \cite{Tao}, Tao proved that, every odd number $N$, greater than $1$ can be expressed as the sum of at most five primes, improving the result of Ramaré, where every even natural number can be expressed as the sum of at most six primes. They follow the circle method of Hardy-Littlewood and Vinogradov, together with Vaughan's identity; their additional techniques, which may be of interest for other Goldbach-type problems, include the use of smoothed exponential sums and optimization of the Vaughan identity parameters to save or reduce some logarithmic losses. Despite their good approximation, these authors do not use the construction of coprimes, nor the algebraic structure to prove that the conjecture holds for all pairs greater than four. \\

In \cite{Zhao} Zhao, proven that, every sufficiently large even integer can be represented as a sum of four squares of primes and $46$ powers of $2$, but they do not directly use the coprimes and again, they only establish a bound that exceeds the existing bound in previous works. In \cite{Ren}-\cite{Garaev}, it is investigated the distribution of values of the primes plus powers of two, seeking to represent natural numbers to improve the bounds that achieve such a representation. In \cite{Helfgott}, Helfgott prensented a proof of the ternary Goldbach conjecture for all $n\geq C = 10^{27}$, follow an approach based on the circle method, the
large sieve and exponential sums. Some ideas coming from Hardy, Littlewood and Vinogradov are reinterpreted from a modern perspective. While all work has to be explicit, the focus is on qualitative gains. Nevertheless, they do not prove the strong Goldbach conjecture. They neither use the algebraic nature of the coprimes, nor do they generalize the multiplication of coprimes, over $\mathbb{Z}$, as it is carried out in this research.

The Twins Primes Conjecture is one of the oldest in number theory. Many mathematicians continue to research and develop new techniques and approaches to try to address this conjecture. Tools such as sieve theory, harmonic analysis, and analytical methods have been used, but no definitive proof has been found so far.

In \cite{Wang}, presented a formal proof of the twin prime conjecture based on a novel mathematical model of two dimensional mirror primes $\mathbb{P}_\mu \subset \mathbb{P}\times \mathbb{P}$, and their symmetric properties, but the construction differs from that of this work, since it does not use the isomorphism between poset proposed.

In this paper, the Goldbach's conjecture and the Twins primes conjecture are proven by building, for each $m>2$, an isomorphism between posets. One of the posets is the set of coprimes less than $m$, while the other is endowed with an operation that grants it an abelian group structure. Certain special characteristics of this operation are demonstrated, which serve to locate the pairs that satisfy the conjecture. This work follows the next distribution: in the second section the basic theoretical notions of the coprime and non-coprime numbers generalized in $\mathbb{Z}$ are presented.  

In the third section the coprimes that are used to construct Euler's $\phi$ function are defined. An algorithm that generates pairs of numbers that satisfy the conjecture is presented.

In the fourth section be show the figures obtained from the cyclical subgroups of the group proposed in this work are included, which reflect underlying symmetries and regularities patterns, showing the implicit harmony in the coprimes that can be studied in future works. Furthermore, for each $m > 2$, an isomorphism between the posets is defined, in order to guarantee the proof of the central theorem. In the fifth section, the Goldbach's conjecture is proven. In the sixth section the twin primes conjecture is proved. 

Finally, The most relevant conclusions of this research are presented.

\section{theoretical development}

\subsection{Primes and Coprimes in $\mathbb{N}$}

 Primes and Coprimes: In arithmetic, Euler was one of the great mathematicians who saw the important relationship that prime numbers have in Mathematics. Euler proposed the function $\phi (n)$ that counts the number of coprimes or relative primes less than a natural number $m$. This function is a key to the development of this work. For this reason, a theoretical framework, necessary for the proof of the main result of this research, is presented below.

\begin{definition}\label{def1}
Let $\langle \mathbb{N},+,\cdot,|,0,1 \rangle $, be the set of natural numbers, with the operations sum, product and the conventional divisibility relation. Let $x , y \in \mathbb{N}$, be denoted by $ x \wedge y $, the greatest common divisor of $x$ and $y$;  $x$ and $y$ are said to be coprimes (or relatively prime) if and only if $x \wedge y = 1$

Let $m \in \mathbb{N}$ be a fixed number. 
The set of all coprimes less than $m$, will be denoted by $C(m)$, to the set of all coprime numbers whith number $m$, that will be noted by $C(m)$, which is explicitly stated as: 

\[ C(m) = \lbrace s \in \mathbb{N} : m \wedge s = 1 \rbrace.\] In a complementary way, the set of non-coprime naturals with the number $m$, is constructed. This set will be denoted by $\lambda(m)$, that can be written as: \[ \lambda(m) = \lbrace s \in \mathbb{N} : m \wedge s \neq 1 \rbrace\] 
\end{definition}

Comment \ref{remar1} indicates the basic notation and a property of number theory, which will be used to prove some theorems.

\begin{remark}\label{remar1}

The ideas discussed in this document use the following conventions:

\begin{enumerate}
\item $\mathbb{N}^{*}$, are the nonzero natural numbers.

\item $\mathbb{P}$, represents the set of prime numbers.

\item Given a natural number $m$ and $x \in \mathbb{N}, x \leq m $, it is said that, $m - x$, is the relative complement of $x$, with respect to $m$; then it will be called the co-opposite of $x$.
\end{enumerate}

\end{remark}

In example \ref{ejemp1}, we recall the notions of coprimes and non-coprimes, for a number $m$.

\begin{example}\label{ejemp1}
Consider $m = 6$, its decomposition into prime factors $m = 2\cdot3 $, allows to identify the coprimes as the natural ones that, when decomposed into prime factors, none of their powers is divisible by $2$ or $3$, this is:  \[C(6) = \lbrace s \in \mathbb{N} : 6 \wedge s = 1 \rbrace\] 
\[C(6) = \lbrace 1,5,7,11,13,17,19,23,29,31,35,37,41,43,...,55, ..., 65,... \rbrace. \] Another way of looking at it is: \[C(6) = \lbrace s \in \mathbb{N} : s = \prod^{\nu}_{j=0}p_{j}^{n_j}, p_{j} \in \mathbb{P}, p_{j} \neq 2, p_{j} \neq 3, n_j, \nu \in \mathbb{N}. \rbrace\]

Figure \ref{fig1}  shows the tree of natural numbers connected by the order relationship generated by divisibility. A partition of the two blue and magenta sets shows coprime and non-coprime numbers of the number $6$, respectively. In the second level of the tree, the primes $2,3,5,7,11...$, can be observed; the third level is made up of products of pairs of primes belonging  to the second level; in the fourth level, products of triples of primes are shown, etc. Edges of the same color show the divisibility relationship between elements of the same set, while gray edges represent non-coprime numbers, which have some coprime factor; For example, the number $75$ has factors of the coprime and non-coprime set.
\end{example}

\begin{figure}[tb]
\includegraphics[scale=0.075]{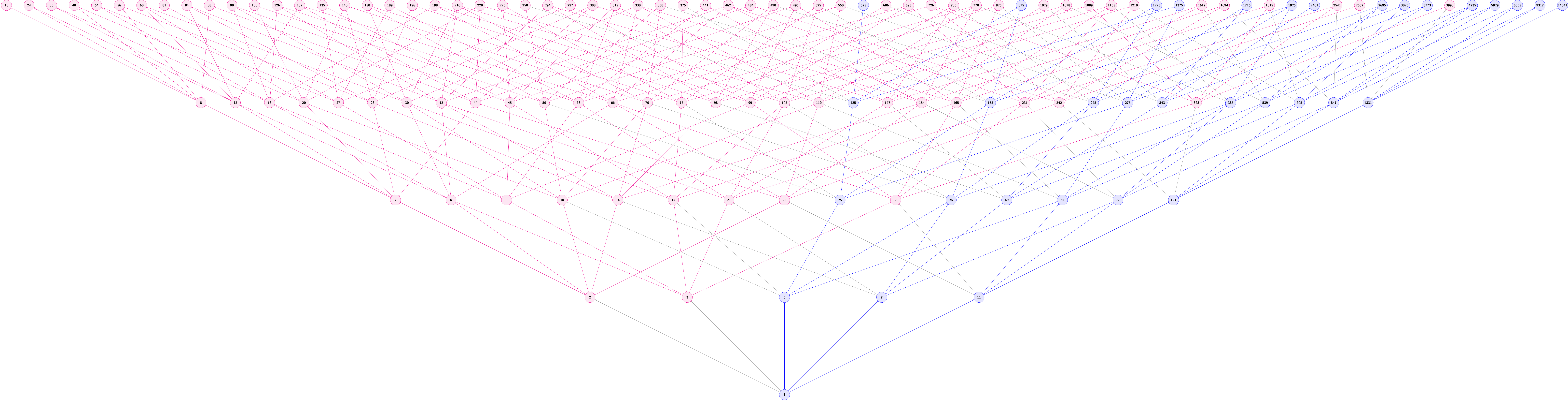}
\caption{$C(6)$ in blue and $\lambda(6)$ in magenta}\label{fig1}
\end{figure}

\subsection{Primes and Coprimes in $\mathbb{Z}$}

The notion of coprimes and non-coprimes is generalized to $\mathbb{Z}$, seeking to build one of the required posets.

\begin{definition}\label{def2}
Let $\langle \mathbb{Z},+,\cdot,|,0,1 \rangle $, be the set of integer numbers, with the operations sum, product and the conventional divisibility relation. Let $x , y \in \mathbb{Z}$, be denoted by $ x \wedge y $, the greatest common divisor of $x$ and $y$;  $x$ and $y$ are said to be coprimes (or relatively prime) if and only if $x \wedge y = 1$. 

The properties of arithmetic that help to characterize the problem are based on a well-known property that is recalled in the comment \ref{remar2}.

\begin{remark}\label{remar2}
Let $ x, y \in \mathbb{Z}^{*}$, then, it is had that $ x \wedge y = d \Leftrightarrow (\exists s, t \in \mathbb{Z}) ( xs + yt = d )$ (greatest common divisor property)\label{ecua1}

\end{remark}

Let $m \in \mathbb{Z}$ be fixed, to the set of all coprime numbers whith number $m$, that will be noted by $\mathbb{C}(m)$, which is explicitly stated as: 

\[\mathbb{C}(m) = \lbrace s \in \mathbb{Z} : m \wedge s = 1 \rbrace.\] In a complementary way, the set of non-coprime naturals with the number $m$, is constructed. This set will be denoted by $\Lambda(m)$, that can be written as: \[ \Lambda(m) = \lbrace s \in \mathbb{Z} : m \wedge s \neq 1 \rbrace\]

\end{definition}
The notion of congruence modulo $m$, can be restricted to $\mathbb{C}(m)$, as follows

\begin{definition}\label{def3}
Let $x,y \in \mathbb{C}(m)$.  It is said that, 
\[ x \equiv_{\mathbb{C}(m)} y  \Leftrightarrow \exists k  \in \mathbb{Z} \left(x - y = mk\right)\]
\end{definition}

In Appendix \ref{appendix}, the proof of the following result is performed.

\begin{theorem}\label{teo1}
For each $m>1$, it is had that  $\equiv_{\mathbb{C}(m)}$, is an equivalence relation on $\mathbb{C}(m)$
\end{theorem}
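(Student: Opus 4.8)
The plan is to verify the three defining properties of an equivalence relation---reflexivity, symmetry, and transitivity---for the relation $\equiv_{\mathbb{C}(m)}$ on the set $\mathbb{C}(m)$, exactly as one does for ordinary congruence modulo $m$ on $\mathbb{Z}$; the only extra point to keep in mind is that we must check the relation is well-defined as a binary relation on $\mathbb{C}(m)$, i.e. it only ever compares coprime elements with coprime elements, which is immediate from Definition \ref{def3} since both $x$ and $y$ are assumed to lie in $\mathbb{C}(m)$.

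\medskip

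\emph{Reflexivity.} For any $x \in \mathbb{C}(m)$ we have $x - x = 0 = m\cdot 0$, so taking $k = 0 \in \mathbb{Z}$ gives $x \equiv_{\mathbb{C}(m)} x$.

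\emph{Symmetry.} Suppose $x, y \in \mathbb{C}(m)$ with $x \equiv_{\mathbb{C}(m)} y$, so there exists $k \in \mathbb{Z}$ with $x - y = mk$. Then $y - x = m(-k)$ and $-k \in \mathbb{Z}$, hence $y \equiv_{\mathbb{C}(m)} x$.

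\emph{Transitivity.} Suppose $x, y, z \in \mathbb{C}(m)$ with $x \equiv_{\mathbb{C}(m)} y$ and $y \equiv_{\mathbb{C}(m)} z$. Then there are $k_1, k_2 \in \mathbb{Z}$ with $x - y = mk_1$ and $y - z = mk_2$. Adding, $x - z = (x-y)+(y-z) = m(k_1 + k_2)$ with $k_1 + k_2 \in \mathbb{Z}$, so $x \equiv_{\mathbb{C}(m)} z$.

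\medskip

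Since all three properties hold for every $m > 1$, the relation $\equiv_{\mathbb{C}(m)}$ is an equivalence relation on $\mathbb{C}(m)$. I do not anticipate a genuine obstacle here: the argument is the standard proof that congruence modulo $m$ is an equivalence relation, restricted to the subset $\mathbb{C}(m) \subseteq \mathbb{Z}$; the closure of $\mathbb{Z}$ under negation and addition is all that is used, and the hypothesis $m > 1$ is not even needed for the equivalence-relation properties themselves (it matters only later, when one wants the quotient to interact well with the group structure and with $\phi(m)$). If anything requires a word of care, it is simply noting explicitly that the set on which the relation is defined, $\mathbb{C}(m)$, is the relevant ambient set, so that reflexivity is being asserted only for coprime $x$---which is exactly what Definition \ref{def3} sets up.
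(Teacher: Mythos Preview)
Your proof is correct and follows essentially the same approach as the paper: both verify reflexivity, symmetry, and transitivity for $\equiv_{\mathbb{C}(m)}$ by the standard argument for congruence modulo $m$, with only cosmetic differences in the transitivity step (you add the two equations directly, while the paper substitutes one into the other). The extra remarks you include about well-definedness on $\mathbb{C}(m)$ and the role of the hypothesis $m>1$ are accurate and do no harm.
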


The quotient set is built in the traditional way, with the equivalence relation $\equiv_{\mathbb{C}(m)}$.

\begin{definition}\label{def4}
Let $a \in \mathbb{C}(m)$. The equivalence class of $a$, for the relationship $\equiv_{\mathbb{C}(m)}$ is built as:  
\[ \overline{a} = \left\lbrace z \in \mathbb{C}(m) : z \equiv_{\mathbb{C}(m)} a \right\rbrace  \]
The set of equivalence classes is called the quotient set and it will denote it by  \[\mathbb{C}(m)/\equiv_{\mathbb{C}(m)}=\left\lbrace \overline{z}: z \in \mathbb{C}(m) \right\rbrace \]
\end{definition}

\begin{remark}\label{remar3}
The cardinal of the quotient set coincides with the number of coprimes less than $m$, which is the value of the function $\phi(m)$, which was proposed by Euler. This quotient set is also noted by $\Gamma(m)=\mathbb{C}(m)/\equiv_{\mathbb{C}(m)}$. This will be used in section \ref{secc3}, in more detail.
\end{remark}

The multiplication of $\cdot$ de $\mathbb{Z}$,  is restricted to $\mathbb{C}(m)$, o endow it with an algebraic structure in the natural way, as follows:

\begin{definition}\label{def5}
Let $x,y \in \mathbb{C}(m)$.  The product in $\mathbb{C}(m)$, is defined as:  
\[ \cdot:\mathbb{C}(m)\times \mathbb{C}(m) \longrightarrow \mathbb{C}(m)\]
\[ \; \; \; (x , y) \longmapsto \cdot(x,y)= x\cdot y\]
\end{definition}

All product properties are inherited to $\mathbb{C}(m)$. In appendix  \ref{appendix}, only the closing property is demonstrated, as it is the least obvious.

\begin{theorem}\label{teo2}
For each $m>1$, the $\cdot$ is an operation on $\mathbb{C}(m)$, which is closure, modulative, associative and commutative
\end{theorem}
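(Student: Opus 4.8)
The plan is to verify the four properties in turn, treating closure as the only substantive point and deriving the remaining three immediately from the corresponding properties of $\cdot$ on $\mathbb{Z}$. The codomain in Definition \ref{def5} already encodes the closure claim, so that is where the work lies.

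For closure I would start from Remark \ref{remar2}. If $x,y \in \mathbb{C}(m)$, then $x \wedge m = 1$ and $y \wedge m = 1$, so there exist $s,t,u,v \in \mathbb{Z}$ with $xs + mt = 1$ and $yu + mv = 1$. Multiplying these two identities and regrouping every summand that carries a factor of $m$ gives $xy(su) + m(xsv + tyu + mtv) = 1$. Setting $S = su$ and $T = xsv + tyu + mtv$, both integers, we obtain $(xy)S + mT = 1$, and the converse implication in Remark \ref{remar2} forces $(xy) \wedge m = 1$, i.e. $xy \in \mathbb{C}(m)$. Hence $\cdot$ indeed maps $\mathbb{C}(m) \times \mathbb{C}(m)$ into $\mathbb{C}(m)$.

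For the modulative property, note that $1 \in \mathbb{C}(m)$ because $m \wedge 1 = 1$, and since $1 \cdot x = x \cdot 1 = x$ already holds in $\mathbb{Z}$ it holds a fortiori in $\mathbb{C}(m)$. Associativity and commutativity are inherited directly: for $x,y,z \in \mathbb{C}(m) \subseteq \mathbb{Z}$ the identities $(x \cdot y)\cdot z = x \cdot (y \cdot z)$ and $x \cdot y = y \cdot x$ are valid because they hold for all integers, and by the closure step each intermediate product again lies in $\mathbb{C}(m)$, so no expression leaves the set. Thus the restriction of an associative, commutative, unital operation to a subset closed under it retains all three features.

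The only genuine obstacle is the closure argument, and even it is routine once Bézout (Remark \ref{remar2}) is invoked; the key bookkeeping step is just to expand $(xs+mt)(yu+mv)$ and collect every summand other than $xy(su)$ into the coefficient of $m$. One may additionally remark that $0 \notin \mathbb{C}(m)$ for $m>1$, since $m \wedge 0 = m \neq 1$, so there is no ambiguity about the domain, though this observation is not needed for the proof.
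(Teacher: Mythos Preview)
Your argument is correct and matches the paper's own proof essentially line for line: both invoke the B\'ezout identity from Remark~\ref{remar2} for $x$ and $y$ separately, multiply the two relations, and collect every term carrying a factor of $m$ to exhibit a B\'ezout relation for $xy$, while noting that associativity, commutativity, and the identity are inherited from $\mathbb{Z}$. Your extra observations that $1\in\mathbb{C}(m)$ and $0\notin\mathbb{C}(m)$ are harmless additions.
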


The $\equiv_{\mathbb{C}(m)}$, is compatible with the product $\cdot$ over $\mathbb{C}(m)$. In appendix \ref{appendix} the proof of the compatibility of the relation is included, with the aim of providing the quotient set $\mathbb{C}(m)/ \equiv_{\mathbb{C}(m)}$, with an algebraic structure.

\begin{theorem}\label{teo3}
For each $m>1$, it is had that \\ 
$\forall x,y,u,v \in \mathbb{C}(m) \left(  x \equiv_{\mathbb{C}(m)} y \wedge u \equiv_{\mathbb{C}(m)} v \Rightarrow x \cdot u \equiv_{\mathbb{C}(m)} y\cdot v \right) $ 
\end{theorem}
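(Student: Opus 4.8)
The plan is to reduce the statement to the classical telescoping identity for differences of products, taking care only that every object involved stays inside $\mathbb{C}(m)$ so that the relation $\equiv_{\mathbb{C}(m)}$ actually applies. First I would fix arbitrary $x,y,u,v \in \mathbb{C}(m)$ satisfying the two hypotheses $x \equiv_{\mathbb{C}(m)} y$ and $u \equiv_{\mathbb{C}(m)} v$. By Definition \ref{def3} there exist integers $k,l \in \mathbb{Z}$ with $x - y = mk$ and $u - v = ml$.

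Next comes the one genuine computation: the telescoping decomposition
\[ xu - yv = (x-y)u + y(u-v). \]
Substituting $x - y = mk$ and $u - v = ml$ yields $xu - yv = mku + yml = m(ku + yl)$. Since $\mathbb{C}(m) \subseteq \mathbb{Z}$, the factor $ku + yl$ is an integer, so $xu - yv$ is an integer multiple of $m$, which is precisely the defining condition of $\equiv_{\mathbb{C}(m)}$.

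Before concluding $x\cdot u \equiv_{\mathbb{C}(m)} y\cdot v$ from Definition \ref{def3}, I would invoke the closure part of Theorem \ref{teo2} to guarantee that $x\cdot u \in \mathbb{C}(m)$ and $y\cdot v \in \mathbb{C}(m)$; otherwise the relation $\equiv_{\mathbb{C}(m)}$ would not even be defined on that pair. The main (and essentially only) obstacle is this bookkeeping point — ensuring the argument remains inside $\mathbb{C}(m)$ rather than drifting into all of $\mathbb{Z}$ — since the algebra itself is the standard congruence manipulation. Once closure is cited, the displayed identity finishes the proof.
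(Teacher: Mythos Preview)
Your proof is correct and is essentially the same as the paper's: the paper writes $x = ms + y$, $u = mt + v$, multiplies, and collects terms to get $xu - yv = m(mst + sv + yt)$, which is algebraically the same identity as your telescoping $xu - yv = (x-y)u + y(u-v) = m(ku + yl)$. Your explicit invocation of the closure part of Theorem~\ref{teo2} to keep $xu$ and $yv$ inside $\mathbb{C}(m)$ is a nice piece of hygiene that the paper's own proof leaves implicit.
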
 

This allows us to extend the product in $\mathbb{C}(m)$, to the quotient set $\mathbb{C}(m)/\equiv_{\mathbb{C}(m)}$ as follows

\begin{definition}\label{def6}
Let $\overline{a},\overline{b} \in \mathbb{C}(m)/\equiv_{\mathbb{C}(m)}$, the product $\odot$ is defined as:  
\[ \odot :\mathbb{C}(m)/\equiv_{\mathbb{C}(m)} \times \mathbb{C}(m)/\equiv_{\mathbb{C}(m)} \longrightarrow \mathbb{C}(m)/\equiv_{\mathbb{C}(m)}\]
\[ \; \; \; \; \; (\overline{a} , \overline{b}) \longmapsto \odot(\overline{a},\overline{b})= \overline{a}\odot\overline{b}=\overline{a\cdot b} \]
\end{definition}

This multiplication $\odot$, gives algebraic structure to the set quotient.

\begin{theorem}\label{teo4}
For each $m>1$, it is had that  $\left\langle \mathbb{C}(m)/\equiv_{\mathbb{C}(m)},\odot,\overline{1} \right\rangle $, is an abelian group.
\end{theorem}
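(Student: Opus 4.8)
The plan is to verify the group axioms for $\left\langle \mathbb{C}(m)/\equiv_{\mathbb{C}(m)},\odot,\overline{1} \right\rangle$ by leveraging the structure already established. By Theorem \ref{teo3} the operation $\odot$ of Definition \ref{def6} is well-defined (independent of representatives), so it remains to check closure, associativity, commutativity, existence of an identity, and existence of inverses. Closure, associativity, commutativity and the fact that $\overline{1}$ is a two-sided identity all descend immediately from the corresponding properties of $\cdot$ on $\mathbb{C}(m)$ proved in Theorem \ref{teo2}: for instance, $\overline{a}\odot\overline{b} = \overline{a\cdot b}$ lies in the quotient because $a\cdot b \in \mathbb{C}(m)$ by closure, and $(\overline{a}\odot\overline{b})\odot\overline{c} = \overline{(a\cdot b)\cdot c} = \overline{a\cdot(b\cdot c)} = \overline{a}\odot(\overline{b}\odot\overline{c})$, and similarly for the other identities. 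So the only genuinely new thing to prove is the existence of inverses.

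For the inverse step I would argue as follows. Fix $\overline{a} \in \mathbb{C}(m)/\equiv_{\mathbb{C}(m)}$ with $a \in \mathbb{C}(m)$, so $a \wedge m = 1$. By the greatest common divisor property recalled in Remark \ref{remar2} (applied with $d = 1$), there exist $s,t \in \mathbb{Z}$ with $a s + m t = 1$. I claim $\overline{s}$ is the desired inverse. First one must check $s \in \mathbb{C}(m)$: any common divisor of $s$ and $m$ divides $as + mt = 1$, hence $s \wedge m = 1$, so $\overline{s}$ is a legitimate element of the quotient. Next, from $as + mt = 1$ we get $a\cdot s - 1 = -mt = m(-t)$, which is exactly the statement $a\cdot s \equiv_{\mathbb{C}(m)} 1$ by Definition \ref{def3}; therefore $\overline{a}\odot\overline{s} = \overline{a\cdot s} = \overline{1}$. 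By commutativity this is also $\overline{s}\odot\overline{a} = \overline{1}$, so $\overline{s} = \overline{a}^{-1}$.

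The main obstacle — really the only point requiring care — is confirming that the Bézout coefficient $s$ lands in $\mathbb{C}(m)$ so that $\overline{s}$ is actually an element of the quotient set; this is the short divisor argument above. Everything else is a routine transfer of the monoid structure from $\mathbb{C}(m)$ (Theorem \ref{teo2}) through the compatible congruence (Theorem \ref{teo3}) to the quotient, together with the observation that well-definedness of $\odot$ was precisely the content of Theorem \ref{teo3}. Assembling these pieces — closure, associativity, commutativity, identity $\overline{1}$, and inverses via Bézout — yields that $\left\langle \mathbb{C}(m)/\equiv_{\mathbb{C}(m)},\odot,\overline{1} \right\rangle$ is an abelian group, which is the assertion of Theorem \ref{teo4}.
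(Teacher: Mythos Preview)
Your proof is correct and follows essentially the same approach as the paper: both defer closure, associativity, commutativity, and the identity to Theorem~\ref{teo2} and Theorem~\ref{teo3}, and both establish inverses via the B\'ezout identity $as + mt = 1$ from Remark~\ref{remar2}, concluding $\overline{a}\odot\overline{s} = \overline{1}$. Your version is in fact slightly more careful, since you explicitly verify that the B\'ezout coefficient $s$ satisfies $s \wedge m = 1$ (so that $\overline{s}$ is a legitimate element of the quotient), a point the paper's proof asserts in line~(5) without justification.
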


\begin{proof}
Only the invertive property will be proved, since associativity, commutative and modulative are inherited from  $\mathbb{C}(m)$. Let $\overline{x} \in \mathbb{C}(m)/\equiv_{\mathbb{C}(m)}$, be arbitrary, such that:
\begin{enumerate}
\item $x \wedge m = 1$, by definition of $x \in \mathbb{C}(m)$.

\item $\exists s , t \in \mathbb{Z} \left(  x\cdot s + m\cdot t = 1 \right) $ is had by \ref{ecua1}.

\item $\exists s , t \in \mathbb{Z} \left( x\cdot s - 1 = m\cdot (-t)\right) $, solving for $m\cdot t$ on the previous line.

\item $\exists s \in \mathbb{Z} \left( x \cdot s \equiv_{\mathbb{C}(m)} 1 \right) $, by definition of $\equiv_{\mathbb{C}(m)}$

\item $\exists s \in \mathbb{C}(m) \left( \overline{x\cdot s}=\overline{1}\right) $,  since the classes are the same for related elements.

\item $\exists s \in \mathbb{C}(m) \left( \overline{x}\odot \overline{s}=\overline{1} \right) $, for compatibility.

\end{enumerate}

The last one  is equivalent to the fact that $x$ has a multiplicative inverse

\end{proof}

In appendix \ref{appendix}, it is shown that the co-opposites of module $\overline{1}$ are nilpotent.

\begin{theorem}\label{teo5}
For each $m>1$, in $\left\langle \mathbb{C}(m)/\equiv_{\mathbb{C}(m)},\odot,\overline{1} \right\rangle $, it is had that $\overline{m-1}\odot\overline{m-1}=\overline{1}$.
\end{theorem}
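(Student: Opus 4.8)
The plan is to work directly with the defining congruence of Definition \ref{def3} together with one elementary algebraic identity. First I would confirm that $m-1$ actually belongs to $\mathbb{C}(m)$, so that the class $\overline{m-1}$ is meaningful: any common divisor of $m$ and $m-1$ divides their difference and hence divides $1$, so $m \wedge (m-1) = 1$ and $m-1 \in \mathbb{C}(m)$. Likewise $1 \in \mathbb{C}(m)$, so the class $\overline{1}$ is the legitimate identity element of Theorem \ref{teo4}.

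Next I would use Definition \ref{def6} to rewrite the left-hand side as $\overline{m-1}\odot\overline{m-1} = \overline{(m-1)\cdot(m-1)}$, which reduces the statement to the equivalence $(m-1)\cdot(m-1) \equiv_{\mathbb{C}(m)} 1$, since two elements of $\mathbb{C}(m)$ determine the same equivalence class precisely when they are related by $\equiv_{\mathbb{C}(m)}$ (Definition \ref{def4}).

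The core step is the expansion $(m-1)^2 = m^2 - 2m + 1 = m(m-2) + 1$, so that $(m-1)\cdot(m-1) - 1 = m\cdot(m-2)$. Taking $k = m-2 \in \mathbb{Z}$ in Definition \ref{def3} yields $(m-1)\cdot(m-1) \equiv_{\mathbb{C}(m)} 1$, and passing to classes gives $\overline{m-1}\odot\overline{m-1} = \overline{1}$, as desired.

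I do not expect any real obstacle: the only point deserving a moment's care is checking membership of $m-1$ and $1$ in $\mathbb{C}(m)$ before invoking well-definedness of $\odot$ (Theorem \ref{teo3} and Definition \ref{def6}); after that the argument is a one-line computation. As a closing remark, this shows that the co-opposite $\overline{m-1}$ of $\overline{1}$ is its own multiplicative inverse in the abelian group of Theorem \ref{teo4}, i.e. it has order dividing $2$.
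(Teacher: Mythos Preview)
Your proof is correct and follows essentially the same route as the paper's own argument: both reduce $\overline{m-1}\odot\overline{m-1}$ to $\overline{(m-1)^2}$ via the definition of $\odot$, expand $(m-1)^2 = m(m-2)+1$, and read off the congruence to $1$. Your version is in fact slightly more careful, since you explicitly verify $m-1 \in \mathbb{C}(m)$ before forming its class, a point the paper leaves implicit.
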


With the usual order of the natural numbers, between $1$ and $m$, in definition \ref{def7}, it is intended to provide the quotient set, with a structure of total order.

\begin{definition}\label{def7}
Let $\overline{x},\overline{y} \in \mathbb{C}(m)/\equiv_{\mathbb{C}(m)}$. We will say that $\overline{x} \preceq \overline{y}$, if only if, $\exists s, t \in \left\lbrace 1,2,...,m-1\right\rbrace \left(s\leq t \wedge s\equiv_{\mathbb{C}(m)} x \wedge t\equiv_{\mathbb{C}(m)} y \right)$.
\end{definition}

The quotient set has the structure of poset, see proof in appendix \ref{appendix}.

\begin{theorem}\label{teo6}
For each $m>1$, it follows that $\left\langle \mathbb{C}(m)/\equiv_{\mathbb{C}(m)},\preceq \right\rangle $ is a poset. Also, $\preceq$ is total order.
\end{theorem}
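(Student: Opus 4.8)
The plan is to reduce $\preceq$ to the usual order $\leq$ on a finite set of canonical representatives and then transport the order-theoretic properties along this reduction. The essential preliminary is a \emph{representative lemma}: for $m>1$, every class $\overline{x}\in\mathbb{C}(m)/\equiv_{\mathbb{C}(m)}$ contains exactly one element of $\{1,2,\dots,m-1\}$. Existence follows from the division algorithm in $\mathbb{Z}$: write $x=mq+s$ with $0\le s<m$; then $s\wedge m=x\wedge m=1$, so $s\in\mathbb{C}(m)$, and $s\neq 0$ because $0\wedge m=m>1$, whence $s\in\{1,\dots,m-1\}$ and $s\equiv_{\mathbb{C}(m)}x$. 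Uniqueness is immediate: if $s,s'\in\{1,\dots,m-1\}$ both lie in $\overline{x}$, then $m\mid s-s'$ while $|s-s'|<m$, forcing $s=s'$. Denote this unique representative by $r(\overline{x})$; the map $\overline{x}\mapsto r(\overline{x})$ is then a well-defined injection of $\mathbb{C}(m)/\equiv_{\mathbb{C}(m)}$ into $\{1,\dots,m-1\}$.

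Next I would observe that Definition \ref{def7} says precisely $\overline{x}\preceq\overline{y}\iff r(\overline{x})\le r(\overline{y})$. Indeed, any $s\in\{1,\dots,m-1\}$ with $s\equiv_{\mathbb{C}(m)}x$ must equal $r(\overline{x})$ by the uniqueness part of the lemma, and likewise any admissible $t$ must equal $r(\overline{y})$; conversely $s=r(\overline{x})$ and $t=r(\overline{y})$ always witness the existential clause. Thus $\preceq$ is the preimage, under the injective map $r$, of the restriction of $\leq$ to $\{1,\dots,m-1\}$.

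From here the verification is routine. Reflexivity: $r(\overline{x})\le r(\overline{x})$. Antisymmetry: if $r(\overline{x})\le r(\overline{y})$ and $r(\overline{y})\le r(\overline{x})$, then $r(\overline{x})=r(\overline{y})$; since $r(\overline{x})\equiv_{\mathbb{C}(m)}x$ and $r(\overline{y})\equiv_{\mathbb{C}(m)}y$, we get $x\equiv_{\mathbb{C}(m)}y$, i.e. $\overline{x}=\overline{y}$ (this is exactly where injectivity of $r$ is used). Transitivity: $r(\overline{x})\le r(\overline{y})\le r(\overline{z})$ gives $r(\overline{x})\le r(\overline{z})$ by transitivity of $\le$ on $\mathbb{N}$. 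Totality: for any $\overline{x},\overline{y}$ the naturals $r(\overline{x}),r(\overline{y})$ satisfy $r(\overline{x})\le r(\overline{y})$ or $r(\overline{y})\le r(\overline{x})$, hence $\overline{x}\preceq\overline{y}$ or $\overline{y}\preceq\overline{x}$.

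The only real content is the representative lemma, and within it the single delicate point is that $0\notin\mathbb{C}(m)$, so that the canonical remainder genuinely lands in $\{1,\dots,m-1\}$; this is the step that uses the hypothesis $m>1$. Everything after that is transport of structure along the injection $r$, and I would write it compactly. One may additionally remark that $r$ is a bijection onto the reduced residue system mod $m$, so $\langle\mathbb{C}(m)/\equiv_{\mathbb{C}(m)},\preceq\rangle$ is order-isomorphic to a finite chain of length $\phi(m)$, though this observation is not needed for the statement itself.
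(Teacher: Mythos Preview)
Your proof is correct and follows essentially the same strategy as the paper: reduce $\preceq$ to the usual order $\leq$ on representatives in $\{1,\dots,m-1\}$ and inherit the poset axioms from there. The paper's version is terser---it declares reflexivity, transitivity, and totality ``simple'' and writes out only antisymmetry---while you package the reduction via an explicit canonical-representative map $r$ and verify all four axioms uniformly. Your representative lemma (existence and, crucially, \emph{uniqueness} of the residue in $\{1,\dots,m-1\}$) actually plugs a small gap in the paper's antisymmetry argument: the paper silently reuses the same witnesses $s,t$ from the two hypotheses $\overline{x}\preceq\overline{y}$ and $\overline{y}\preceq\overline{x}$, which is only legitimate once uniqueness is known.
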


The following property shows an important characteristic that relates the module $\overline{1}$, the co-opposite to the module $\overline{m-1}$, the operation $\odot$ and the order $\preceq$ previously defined.

\begin{theorem}\label{teo7}
For each $m>1$, and for each $\overline{x} \in \mathbb{C}(m)/\equiv_{\mathbb{C}(m)}$, if $\overline{1} \preceq \overline{x} \preceq \overline{m-1}$ then, when operating by $\odot$ on the inequality, each term by $\overline{m-1}$  (the co-opposite of $\overline{1}$) has that  $\overline{m - 1}\odot \overline{1} \succeq \overline{m - 1}\odot \overline{x} \succeq \overline{m - 1}\odot\overline{m-1}$, which is equivalent to that  $\overline{1} \preceq \overline{m - x} \preceq \overline{m-1}$.
\end{theorem}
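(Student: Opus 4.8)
The plan is to chase the inequality through the order-reversing action of multiplication by the nilpotent element $\overline{m-1}$, and then re-identify the resulting class with the co-opposite class. Start from the hypothesis $\overline{1} \preceq \overline{x} \preceq \overline{m-1}$. By Definition \ref{def7}, there exist representatives in $\{1,2,\dots,m-1\}$ witnessing this; in fact the natural choice is that $x$ itself is taken in $\{1,\dots,m-1\}$, so the chain is just $1 \le x \le m-1$ in $\mathbb{N}$. The first step is to show that operating by $\overline{m-1}$ reverses the order, i.e.\ that $\overline{u} \preceq \overline{v}$ implies $\overline{m-1}\odot\overline{v} \preceq \overline{m-1}\odot\overline{u}$ for classes between $\overline{1}$ and $\overline{m-1}$. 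For this I would compute a canonical representative of $\overline{m-1}\odot\overline{x} = \overline{(m-1)x} = \overline{mx - x} = \overline{-x}$, and observe that $-x \equiv_{\mathbb{C}(m)} m-x$, with $m-x \in \{1,\dots,m-1\}$ exactly when $x \in \{1,\dots,m-1\}$. Hence $\overline{m-1}\odot\overline{x} = \overline{m-x}$, and likewise $\overline{m-1}\odot\overline{1} = \overline{m-1}$ and $\overline{m-1}\odot\overline{m-1} = \overline{1}$ (the latter is exactly Theorem \ref{teo5}).

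With these identifications in hand, the claimed chain $\overline{m-1}\odot\overline{1} \succeq \overline{m-1}\odot\overline{x} \succeq \overline{m-1}\odot\overline{m-1}$ becomes $\overline{m-1} \succeq \overline{m-x} \succeq \overline{1}$, i.e.\ $\overline{1} \preceq \overline{m-x} \preceq \overline{m-1}$. To verify this last chain directly from Definition \ref{def7}, note that since $1 \le x \le m-1$ we have $1 \le m-x \le m-1$, so $m-x$ is its own witnessing representative in $\{1,\dots,m-1\}$, and the inequalities $1 \le m-x$ and $m-x \le m-1$ hold in $\mathbb{N}$; moreover $m-x \in \mathbb{C}(m)$ because $x \wedge m = 1 \Rightarrow (m-x)\wedge m = 1$ (any common divisor of $m$ and $m-x$ divides $x$). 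This closes the argument.

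The one genuine subtlety — and the step I would be most careful about — is the order-reversal claim itself: that left-multiplication by $\overline{m-1}$ sends $\preceq$ to $\succeq$ on the interval $[\overline{1},\overline{m-1}]$. The clean way to see it is the explicit formula $\overline{m-1}\odot\overline{x} = \overline{m-x}$ together with the elementary fact that $x \mapsto m-x$ is an order-reversing bijection of $\{1,2,\dots,m-1\}$ that furthermore maps $\mathbb{C}(m)\cap\{1,\dots,m-1\}$ onto itself; combined with the compatibility of $\preceq$ with passing to these canonical representatives (Theorem \ref{teo6}), this gives exactly what is needed. In other words, the whole theorem reduces to the remark that "multiply by $\overline{m-1}$" is, on canonical representatives, nothing but "take the co-opposite," and the co-opposite map is an order-reversing involution of the coprime interval — which is why $\overline{m-1}$ behaves as the fingerprint-reflecting element announced in the abstract. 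I would present the representative computation $\overline{(m-1)x}=\overline{m-x}$ first, state the co-opposite bijection as a one-line lemma (or cite Theorem \ref{teo5} plus Theorem \ref{teo6}), and then the inequality chain is immediate.
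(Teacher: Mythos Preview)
Your proposal is correct and follows essentially the same route as the paper: compute $\overline{m-1}\odot\overline{1}=\overline{m-1}$, $\overline{m-1}\odot\overline{x}=\overline{(m-1)x}=\overline{mx-x}=\overline{m-x}$, and $\overline{m-1}\odot\overline{m-1}=\overline{1}$, then recover the order inequality from the integer fact that $1\le x\le m-1$ implies $1\le m-x\le m-1$. Your extra care in checking $m-x\in\mathbb{C}(m)$ and framing the map $x\mapsto m-x$ as an order-reversing involution is a welcome addition, but the underlying argument is the same as the paper's.
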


\begin{proof}
Let $\overline{x} \in \mathbb{C}(m)/\equiv_{\mathbb{C}(m)}$, such that  $\overline{1} \preceq \overline{x} \preceq \overline{m-1}$:
\begin{enumerate} 
\item $\overline{m - 1}\odot \overline{1} = \overline{m -1}$, by modulative property.

\item $\overline{m - 1}\odot\overline{x} = \overline{(m - 1)\cdot x} = \overline{m\cdot x - x}= \overline{m - x}$, by compatibility and distributive property.

\item $\overline{m - 1}\odot\overline{m-1} = \overline{1}$, by the idempotency property of the co-opposite.
\end{enumerate}

It only remains to show that the order is preserved, for this without loss of generality, taking $1\leq x \leq m-1$, the usual order in integers, and multiplying by $-1$, then adding $m$ to each term, we obtain $m-1\geq m - x \geq 1$. Finally, the inequality is given, by applying the equivalence classes to each member of the inequality, this is $\overline{m - 1}\succeq \overline{m - x} \succeq \overline{1}$.

\end{proof}

\begin{theorem}\label{teo8}
For each $m>1$, and for each $\overline{x} \in \mathbb{C}(2m)/\equiv_{\mathbb{C}(2m)}$, if $\overline{1} \preceq \overline{x} \preceq \overline{m}$ then, by operating $\odot$ to the inequality each term times $\overline{2m-1}$ (the co-opposite of $\overline{1}$ in $\mathbb{C}(2m)/\equiv_{\mathbb{C}(2m)}$), we have that $\overline{2m - 1}\odot \overline{1} \succeq \overline{2m - 1}\odot \overline{x} \succeq \overline{2m - 1}\odot\overline{m}$, which is equal to $\overline{m} \preceq \overline{2m - x} \preceq \overline{ 2m-1}$.

\end{theorem}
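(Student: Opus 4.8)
The plan is to transcribe the proof of Theorem~\ref{teo7} almost verbatim, working now inside the abelian group $\left\langle \mathbb{C}(2m)/\equiv_{\mathbb{C}(2m)},\odot,\overline{1}\right\rangle$ (a group by Theorem~\ref{teo4} applied to the modulus $2m$) and with the threshold $m$ in place of $m-1$. The co-opposite of $\overline{1}$ here is $\overline{2m-1}$, and by Theorem~\ref{teo5} applied to $2m$ it is its own inverse, $\overline{2m-1}\odot\overline{2m-1}=\overline{1}$; equivalently $\overline{2m-1}=\overline{-1}$, so operating by it should reverse the order $\preceq$, which is essentially the only thing that has to be checked.

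First I would record the three products involved, using compatibility (Theorem~\ref{teo3}) and distributivity. They are: (i) $\overline{2m-1}\odot\overline{1}=\overline{2m-1}$, by the modulative property; (ii) $\overline{2m-1}\odot\overline{x}=\overline{(2m-1)\cdot x}=\overline{2mx-x}=\overline{2m-x}$, the last equality holding because $(2mx-x)-(2m-x)=2m(x-1)$ is a multiple of $2m$, and $\overline{2m-x}$ being a genuine class since $(2m-x)\wedge 2m=x\wedge 2m=1$; and (iii) $\overline{2m-1}\odot\overline{m}=\overline{(2m-1)\cdot m}=\overline{2m^{2}-m}=\overline{m}$, because $(2m^{2}-m)-m=2m(m-1)$ is a multiple of $2m$ — so $\overline{m}$ is fixed by operating with $\overline{2m-1}$, the exact analogue of the idempotency of the co-opposite invoked in Theorem~\ref{teo7}.

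Then I would verify the reversal of the order exactly as in Theorem~\ref{teo7}: writing the hypothesis on least positive representatives as $1\le x\le m$ in the usual order of $\mathbb{Z}$, multiply through by $-1$ to obtain $-m\le -x\le -1$, and add $2m$ to each term to obtain $m\le 2m-x\le 2m-1$, whose members all lie in $\{1,2,\dots,2m-1\}$. Passing to equivalence classes through Definition~\ref{def7} (and using that $\preceq$ is a total order on the quotient, Theorem~\ref{teo6} for $2m$) gives $\overline{m}\preceq\overline{2m-x}\preceq\overline{2m-1}$. Combined with (i)--(iii), the chain $\overline{2m-1}\odot\overline{1}\succeq\overline{2m-1}\odot\overline{x}\succeq\overline{2m-1}\odot\overline{m}$ is nothing but $\overline{2m-1}\succeq\overline{2m-x}\succeq\overline{m}$, that is, $\overline{m}\preceq\overline{2m-x}\preceq\overline{2m-1}$, as claimed.

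The one point that genuinely needs care — and I would expect it to be the only real obstacle — is that $m\wedge 2m=m>1$ for every $m>1$, so $\overline{m}$ is not, strictly speaking, an element of $\mathbb{C}(2m)/\equiv_{\mathbb{C}(2m)}$ and Definition~\ref{def7} does not literally attach a meaning to ``$\overline{x}\preceq\overline{m}$''. The clean way to read the statement is via least positive representatives: the hypothesis says the representative of $\overline{x}$ is $\le m$, and the conclusion says the representative of $\overline{2m-x}$ is $\ge m$, i.e. it falls in the upper half $\{m,\dots,2m-1\}$; alternatively one may replace the endpoint $\overline{m}$ by the class of the largest coprime of $2m$ not exceeding $m$. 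With that understood, nothing else in the argument is new: it is word for word the proof of Theorem~\ref{teo7} with $m$ replaced by $2m$, and the order-reversing involution $x\mapsto 2m-x$ it exhibits between the ``lower half'' and the ``upper half'' of the coprimes of $2m$ is exactly the mechanism one will want for pairing a prime $p<m$ with its co-opposite $2m-p$ in the Goldbach argument.
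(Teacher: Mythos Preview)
Your proof is correct and follows exactly the same route as the paper: compute the three products $\overline{2m-1}\odot\overline{1}$, $\overline{2m-1}\odot\overline{x}$, $\overline{2m-1}\odot\overline{m}$ via compatibility and distributivity, then obtain the order reversal by multiplying $1\le x\le m$ through by $-1$, adding $2m$, and passing to classes. Your additional remark that $\overline{m}$ is not literally an element of $\mathbb{C}(2m)/\equiv_{\mathbb{C}(2m)}$ since $m\wedge 2m=m>1$ is a genuine observation that the paper does not address; your reading via least positive representatives is the natural fix.
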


\begin{proof}
Let $\overline{x} \in \mathbb{C}(2m)/\equiv_{\mathbb{C}(2m)}$, such that $\overline{1} \preceq \overline{x} \preceq \overline{m}$:
\begin{enumerate} 
\item $\overline{2m - 1}\odot \overline{1} = \overline{2m -1}$, by modulative property.

\item $\overline{2m - 1}\odot\overline{x} = \overline{(2m - 1)\cdot x} = \overline{2m\cdot x - x}= \overline{2m - x}$, for compatibility and distributive properties in $\mathbb{Z}$.

\item $\overline{2m - 1}\odot\overline{m} = \overline{2m^2 - m} =\overline{2m - m} = \overline{m} $, by  distributive property and $2m^2 \equiv_{2m} 2m $.

\end{enumerate}

It only remains to show that order preservation is obtained, without loss of generality, taking $1\leq x \leq m$ the usual order in integers when multiplied by $-1$ and adding $2m$ to each term, we obtain $2m-1\geq 2m - x \geq m$. Finally, the inequality is given by applying equivalence classes.

\end{proof}

\section{Finite Euler coprimes}\label{secc3}

In this section we will use the set defined by Euler to build the function $\phi$ that counts the coprimes, less than a natural number. This given set with a special relationship turns out to be a partially ordered set with a minimum, and with atomic or prime elements. In addition, the function $\phi$ and its most important properties that were used in the construction of an algorithm will be remembered. Such an algorithm is implemented in Matlab2020b, as is presented in the last section. It calculates all the primes that satisfy the conjecture when the user enters a number $2m\geq 4$.

\begin{definition}\label{def8}
Let $m \in \mathbb{N}^{*}$, be fixed. The set of coprimes less than $m$ will be denoted by $\gamma(m)$. Then, it is had to: $\gamma(m)= \{s \in \mathbb{N} : s \wedge m = 1, s < m \}$. It defines the function $\phi(m)$ as the cardinal of $\gamma(m)$. Furthermore, the isomorphism between the posets $\langle \gamma, \leq\rangle$ and $\langle \Gamma(m), \preceq \rangle$ is given by: $\theta: \gamma(m)\longrightarrow \Gamma (m)$ defined by $\theta(s)= \overline{s}$. For this reason, we have that $\phi(m) = \vert \gamma(m)\vert = \vert \Gamma(m)\vert$.
\end{definition}

The basic properties of the $\phi$ function that optimize computing time are indicated.

\begin{theorem}\label{teo10}
The main properties used in the algorithm proposed in this work are presented.
\begin{enumerate}

\item $\phi(1) = 1$.

\item $\phi(p) = p - 1$ if and only if $p$ is prime.

\item If $m$ and $n$ are coprime then, $\phi(m\cdot n) =\phi(m)\cdot \phi(n)$.

\item If $m = \prod^{\nu}_{j=1}p_{j}^{n_j}, p_{j} \in \mathbb{P}, n_j, \nu \in \mathbb{N^{*}}$ is the prime factorization of $m$, except for the order of the factors, then $m = \prod^{\nu}_{j=1}p_{j}^{n_j-1}(p_{j} - 1)$.

\item  If $m$ and $n$ are nonzero then $\phi(m\cdot n) \geq \phi(m)\cdot \phi(n)$.

\end{enumerate}

\end{theorem}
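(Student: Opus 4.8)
The plan is to prove the five items in order, using the description $\phi(m) = \vert\gamma(m)\vert$ from Definition \ref{def8} together with elementary divisibility; none of them needs the group or poset machinery of Section 2, only the gcd property recalled in Remark \ref{remar2}. Item (1) is immediate: the only natural number below $1$ is $0$, and $0\wedge 1 = 1$, so $\gamma(1) = \{0\}$. For item (2), if $p$ is prime then none of $1,\dots,p-1$ is a multiple of $p$, hence each is coprime to $p$, while $0\wedge p = p\neq 1$; thus $\gamma(p) = \{1,\dots,p-1\}$ and $\phi(p) = p-1$. Conversely, if $p$ is not prime then either $p=1$, where $\phi(1) = 1 \neq 0$, or $p$ has a divisor $d$ with $1 < d < p$, so $d \notin \gamma(p)$ and $\phi(p) \leq p-2$; in both cases $\phi(p) \neq p-1$, which gives the biconditional by contraposition.

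For item (3) I would build a bijection $\gamma(m\cdot n) \longrightarrow \gamma(m)\times\gamma(n)$ under the hypothesis $m\wedge n = 1$. Two facts are needed. First, $s\wedge(m\cdot n) = 1$ if and only if $s\wedge m = 1$ and $s\wedge n = 1$; the nontrivial direction uses $m\wedge n = 1$ and Remark \ref{remar2}. Second, the map sending $s$ to the pair of its least nonnegative residues modulo $m$ and modulo $n$ is a bijection from $\{0,\dots,mn-1\}$ onto $\{0,\dots,m-1\}\times\{0,\dots,n-1\}$ precisely because $m\wedge n = 1$ (this is the Chinese Remainder Theorem), and it respects coprimality since $s$ and its residue modulo $m$ have the same gcd with $m$. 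Comparing the cardinalities of the two sides yields $\phi(m\cdot n) = \phi(m)\cdot\phi(n)$.

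For item (4), I would first treat prime powers: among $0,1,\dots,p^{k}-1$, exactly the $p^{k-1}$ multiples of $p$ fail to be coprime to $p^{k}$, so $\phi(p^{k}) = p^{k} - p^{k-1} = p^{k-1}(p-1)$. Writing $m = \prod_{j=1}^{\nu}p_j^{n_j}$ with the $p_j$ distinct, the factors $p_j^{n_j}$ are pairwise coprime, so iterating item (3) gives $\phi(m) = \prod_{j=1}^{\nu}\phi(p_j^{n_j}) = \prod_{j=1}^{\nu}p_j^{n_j-1}(p_j-1)$ (the displayed equation in the statement is missing its leading ``$\phi$'' and should read $\phi(m) = \cdots$).

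Finally, for item (5), observe that the primes dividing $m\cdot n$ are exactly those dividing $m$ together with those dividing $n$. From item (4), $\phi(k)/k = \prod_{p\mid k}(1 - 1/p)$ for every $k\geq 1$, so $\phi(m\cdot n)/(m\cdot n) = \prod_{p\mid m\cdot n}(1-1/p)$, while $\phi(m)\phi(n)/(m\cdot n) = \prod_{p\mid m}(1-1/p)\prod_{p\mid n}(1-1/p)$ differs from it only by additional factors $1 - 1/p \in (0,1)$, one for each prime dividing both $m$ and $n$; since multiplying a positive quantity by numbers in $(0,1)$ can only decrease it, $\phi(m)\phi(n) \leq \phi(m\cdot n)$. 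Equivalently, $\phi(m\cdot n)\,\phi(d) = d\,\phi(m)\phi(n)$ with $d = m\wedge n$, and $\phi(d)\leq d$ because $\gamma(d)\subseteq\{0,\dots,d-1\}$. The routine parts are items (1), (2) and the prime-power count; the main obstacle is item (3), since both the splitting of coprimality and the residue bijection are where the hypothesis $m\wedge n = 1$ is indispensable, i.e. the real content there is the Chinese Remainder Theorem, and item (5) is then a short corollary once the product formula of item (4) is available.
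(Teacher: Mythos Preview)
Your argument is correct and follows the classical route (direct count for prime powers, Chinese Remainder Theorem for multiplicativity, and the product formula $\phi(k)/k=\prod_{p\mid k}(1-1/p)$ for the inequality); you also rightly flag the missing ``$\phi$'' on the left of the displayed formula in item (4). The paper itself, however, does not supply a proof of Theorem~\ref{teo10}: it merely records these as standard facts about Euler's function to be used in the algorithm, so there is no in-paper argument to compare against. In that sense your proposal does not diverge from the paper's approach---it simply fills in what the paper leaves unproved, and it does so with the textbook proof.
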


The following lemma allows us to guarantee the existence of primes in $\gamma(m)$.

\begin{lemma}\label{lema1}
$\forall m>2 \exists p \in \mathbb{P} \left( p \in \gamma(m)\right) $.
\end{lemma}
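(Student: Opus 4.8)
The plan is to produce the required prime as the smallest element of $\gamma(m)$ that exceeds $1$, and then to exploit its minimality to force it to be prime.

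First I would verify that $\gamma(m)$ contains something other than $1$. The natural candidate is the co-opposite $m-1$ of $1$: any common divisor of $m$ and $m-1$ divides their difference, so $(m-1)\wedge m = 1$, and since $m-1 < m$ we get $m-1 \in \gamma(m)$; as $m > 2$ we also have $m-1 > 1$. Hence $S = \{\, s \in \gamma(m) : s > 1 \,\}$ is a nonempty subset of $\mathbb{N}$ and therefore has a least element $p$.

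The core step is to show $p \in \mathbb{P}$, which I would argue by contradiction. If $p$ is composite it factors as $p = a\cdot b$ with $1 < a < p$. Since $a \mid p$, every common divisor of $a$ and $m$ divides $p$ and $m$, hence divides $p \wedge m = 1$, so $a \wedge m = 1$. Moreover $a < p$ and $p < m$ (the latter because $p \in \gamma(m)$), so $a < m$; thus $a \in \gamma(m)$ with $a > 1$, i.e. $a \in S$ with $a < p = \min S$, a contradiction. Therefore $p$ is prime, and $p \in \gamma(m)$, which is exactly the claim.

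I do not expect a genuine obstacle: the whole argument is elementary — it is essentially the classical fact that the least divisor $>1$ of an integer $>1$ is prime, transported to $\gamma(m)$ — and in particular it needs no analytic input such as Bertrand's postulate. The only mild point to get right is confirming that $\gamma(m)$ is not reduced to $\{1\}$, which the co-opposite $m-1$ settles immediately.
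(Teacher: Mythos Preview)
Your proof is correct. It differs from the paper's argument mainly in style: the paper proceeds by global contradiction, supposing that no prime lies in $\gamma(m')$, deducing that every prime below $m'$ divides $m'$, and hence that $\phi(m')=1$, which it declares absurd for $m'>2$. Your route is more constructive and self-contained: you explicitly exhibit $m-1\in\gamma(m)$ to witness that $\gamma(m)\setminus\{1\}$ is nonempty (a point the paper invokes but does not justify), and then pin down a specific prime as the least element of that set, using the classical ``least nontrivial divisor is prime'' idea transplanted to $\gamma(m)$. Both arguments ultimately rest on the same elementary fact $\phi(m)>1$, but yours produces the witness directly and avoids the intermediate step ``all primes below $m$ divide $m$''.
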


\begin{proof}
\textbf{Reductio ad absurdum} suppose that there exists $m'>2$, such that, for every prime $p$, we have that $p \not\in \gamma(m')$. This implies that, for any prime $p$, one has that $p \wedge m' \neq 1$ or $p>m'$. This means that, all primes less than $m'$ divide it, then $\phi(m')=1$, absurd, so $\forall m>2 \left( \phi(m)>1\right)$.
\end{proof}

In figure \ref{fig2}, it can be seen that the pairs that satisfy the conjecture are in the set $\gamma(2m) \times \gamma(2m)$ or the pairs of the classes that satisfy the conjecture are in the quotient set $\Gamma(2m)\times \Gamma(2m)$.

\begin{figure}[tb]
\includegraphics[scale=0.7]{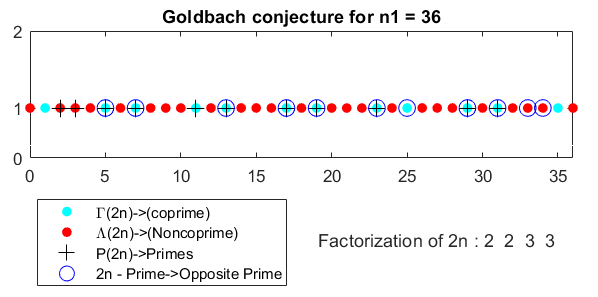}
\caption{$2m = 36$ in blue are coprime and non-coprime in red, those that satisfy the conjecture have a circle with an cross}\label{fig2}
\end{figure}

An algorithm is presented that calculates the pairs of primes $p,q \in \mathbb{P}$, that satisfy Goldbach's theorem for any $2m > 4$ pair, using Euler's $\phi(2m)$  function. In figure \ref{fig3}, we can see the coprimes in blue, that are in the lower part of the line. The non-coprimes, also shown with magenta color are always on top like in a kind of arc.

\begin{algorithm}
\small
\caption{Search algorithm for prime pairs that satisfy the conjecture}\label{alg1}
\begin{flushleft} 
$2m \gets input(\text{"Enter an even number:  "})$ \\ 
$Meet := $ Meet a couple of primes that satisfy the conjecture for $2m$\\

$obj1 \gets primes(2m)$\\
$Meet \gets $ $\emptyset$\\

For $i = 1: length(obj1)$\\
\, \, if $obj1(i) \wedge 2m - obj1(i))==1$\\
\, \, \, \, if $\phi(2m - obj1(i)) == 2m - obj1(i) -1$\\
\, \, \, \, \, \, $Meet = [Meet ;[obj1(i),2n - obj1(i);2n - obj1(i),obj1(i)]]$\\
\, \, if $obj1(i) - 1 == \phi(2n - obj1(i))$\\ 
\, \, \, \, $Meet = [Meet ;[obj1(i),2n - obj1(i);2n - obj1(i),obj1(i)]]$\\
Return $Meet$ 
\end{flushleft} 
\end{algorithm}

\begin{example}\label{ejemp2}

Given $2m = 36$, calculating $\Gamma (2m)$, the set of coprimes of $2m$ and the result of operating $\odot$ on $\Gamma(2m)$ is obtained, validating the guaranteed properties for any pair $2m$ with $m>1$.

\begin{enumerate}

\item $\Gamma(36) = \left\lbrace \overline{1},\overline{5},\overline{7},\overline{11},\overline{13},\overline{17},\overline{19},\overline{23},\overline{25},\overline{29},\overline{31},\overline{35}\right\rbrace$.

\item It is observed $\odot$ in table\ref{tabla1} of Cayley.

	\begin{table}[ht]
		\centering
		\begin{tabular}{|>{\columncolor{gray!30}}c|>{\columncolor{blue!30}}c|c|c|c|c|>{\columncolor{blue!30}}c|>{\columncolor{magenta!30}}c|c|c|c|c|>{\columncolor{magenta!30}}c|}
			\hline
			\rowcolor{gray!30}
			\cellcolor{white}{$\odot$} & $\overline{1}$ & $\overline{5}$ & $\overline{7}$ & $\overline{11}$ & $\overline{13}$ & $\overline{17}$ & $\overline{19}$ & $\overline{23}$ & $\overline{25}$ & $\overline{29}$ & $\overline{31}$ & $\overline{35}$\\
			\hline
			\rowcolor{blue!30}
			\cellcolor{gray!30}{$\overline{1}$} & \cellcolor{green!20}{$\overline{1}$} & $\overline{5}$ & $\overline{7}$ & $\overline{11}$ & $\overline{13}$ & $\overline{17}$ & $\overline{19}$ & $\overline{23}$ & $\overline{25}$ & $\overline{29}$ & $\overline{31}$ & \cellcolor{orange!20}{$\overline{35}$} \\
			\hline
			$\overline{5}$ & $\overline{5}$ & $\overline{25}$ & \cellcolor{orange!20}{$\overline{35}$} & $\overline{19}$ & $\overline{29}$ & $\overline{13}$ & $\overline{23}$ & $\overline{7}$ & $\overline{17}$ & \cellcolor{green!20}{$\overline{1}$} & $\overline{11}$ & $\overline{31}$\\
			\hline
			$\overline{7}$ & $\overline{7}$ & \cellcolor{orange!20}{$\overline{35}$} & $\overline{13}$ & $\overline{5}$ & $\overline{19}$ & $\overline{11}$ & $\overline{25}$ & $\overline{17}$ & $\overline{31}$ & $\overline{23}$ & \cellcolor{green!20}{$\overline{1}$} & $\overline{29}$\\
			\hline
			$\overline{11}$ & $\overline{11}$ & $\overline{19}$ & $\overline{5}$ & $\overline{13}$ & \cellcolor{orange!20}{$\overline{35}$} & $\overline{7}$ & $\overline{29}$ & \cellcolor{green!20}{$\overline{1}$} & $\overline{23}$ & $\overline{31}$ & $\overline{17}$ & $\overline{25}$\\
			\hline
			$\overline{13}$ & $\overline{13}$ & $\overline{29}$ & $\overline{19}$ & \cellcolor{orange!20}{$\overline{35}$} & $\overline{25}$ & $\overline{5}$ & $\overline{31}$ & $\overline{11}$ & \cellcolor{green!20}{$\overline{1}$} & $\overline{17}$ & $\overline{7}$ & $\overline{23}$\\
			\hline
			\rowcolor{blue!30}
			\cellcolor{gray!30}{$\overline{17}$} & $\overline{17}$ & $\overline{13}$ & $\overline{11}$ & $\overline{7}$ & $\overline{5}$ & \cellcolor{green!20}{$\overline{1}$} & \cellcolor{orange!20}{$\overline{35}$} & $\overline{31}$ & $\overline{29}$ & $\overline{25}$ & $\overline{23}$ & $\overline{19}$\\
			\hline
			\rowcolor{magenta!30}
			\cellcolor{gray!30}{$\overline{19}$} & $\overline{19}$ & $\overline{23}$ & $\overline{25}$ & $\overline{29}$ & $\overline{31}$ & \cellcolor{orange!20}{$\overline{35}$} & \cellcolor{green!20}{$\overline{1}$} & $\overline{5}$ & $\overline{7}$ & $\overline{11}$ & $\overline{13}$ & $\overline{17}$\\
			\hline
			$\overline{23}$ & $\overline{23}$ & $\overline{7}$ & $\overline{17}$ & \cellcolor{green!20}{$\overline{1}$} & $\overline{11}$ & $\overline{31}$ & $\overline{5}$ & $\overline{25}$ & \cellcolor{orange!20}{$\overline{35}$} & $\overline{19}$ & $\overline{29}$ & $\overline{13}$\\
			\hline
			$\overline{25}$ & $\overline{25}$ & $\overline{17}$ & $\overline{31}$ & $\overline{23}$ & \cellcolor{green!20}{$\overline{1}$} & $\overline{29}$ & $\overline{7}$ & \cellcolor{orange!20}{$\overline{35}$} & $\overline{13}$ & $\overline{5}$ & $\overline{19}$ & $\overline{11}$\\
			\hline
			$\overline{29}$ & $\overline{29}$ & \cellcolor{green!20}{$\overline{1}$} & $\overline{23}$ & $\overline{31}$ & $\overline{17}$ & $\overline{25}$ & $\overline{11}$ & $\overline{19}$ & $\overline{5}$ & $\overline{13}$ & \cellcolor{orange!20}{$\overline{35}$} & $\overline{7}$\\
			\hline
			$\overline{31}$ & $\overline{31}$ & $\overline{11}$ & \cellcolor{green!20}{$\overline{1}$} & $\overline{17}$ & $\overline{7}$ & $\overline{23}$ & $\overline{13}$ & $\overline{29}$ & $\overline{19}$ & \cellcolor{orange!20}{$\overline{35}$} & $\overline{25}$ & $\overline{5}$\\
			\hline
			\rowcolor{magenta!30}
			\cellcolor{gray!30}{$\overline{35}$} & \cellcolor{orange!20}{$\overline{35}$} & $\overline{31}$ & $\overline{29}$ & $\overline{25}$ & $\overline{23}$ & $\overline{19}$ & $\overline{17}$ & $\overline{13}$ & $\overline{11}$ & $\overline{7}$ & $\overline{5}$ & \cellcolor{green!20}{$\overline{1}$}\\
			\hline
		\end{tabular}
		\caption{Table of Cayley of $\odot$ on $\Gamma(36)$}\label{tabla1}
	\end{table} 

\end{enumerate}
\end{example}

\begin{remark}\label{remar4}

Table \ref{tabla1}, of Cayley shows the operation $\odot$ on $\Gamma ( 2m = 36)$, in addition to having beautiful symmetries, the following properties are observed:

\begin{enumerate}
\item $\odot$ is therefore closed, the results of the operation at $\overline{a} \odot \overline{b} \in \Gamma(2m)$ for each $\overline{a}, \overline{b} \in \Gamma(2m)$.

\item $\odot$ is commutative only observing the equality between the upper triangular submatrix and the lower triangular submatrix.

\item $\odot$ is modulative and the module is $\overline{1}$. The blue the rows columns can verify this property.

\item In $\odot$ not only the co-opposite elements (orange cells), but also The module (green cells) are symmetry.

\item $\odot$ is invertive, because the green cells guarantee the pair of inverses whose product is modulo $\overline{1}$.

\item The magenta y column guarantees property (b) of Theorem \ref{teo5}, since operating any element $\overline{x} \in \Gamma(2m)$ with $\overline{2m - 1}$ through $\odot$ gives its co-opposite $\overline{2m - x}$ as a result. Furthermore, while the elements of the module $\overline{1}$ column increase, the elements of the $\overline{2m - 1}$ column decrease. Just to fix ideas, taking the element $x = \overline{19}$ and operating it with $\overline{2m - 1}$, it results in $\overline{17}$, which is the co-opposite of $\overline{19}$, since $\overline{2m - 19} = \overline{17}$.
\item In the dark blue and magenta rows and columns, it can be seen that property (c) of Theorem \ref{teo5} holds; for example, when taking $a_{m} = \overline{17}$ and operating it with  $\overline{2m - 1}$ through $\odot$, it gives its co-opposite $2m - a_{m} = \overline{19} = a_{m+1}$ as a result.

\end{enumerate}
     
\end{remark}

Let's observe in the example that, the $(\overline{5},\overline{31}),(\overline{7},\overline{29}),(\overline{13},\overline{23}),(\overline{17},\overline{19})$ couples satisfy the conjecture and these are found in the blue and magenta columns. There are also the $(\overline{1},\overline{35}),(\overline{11},\overline{25})$ couples that, although their sum is even, do not satisfy the conjecture, because one of the coordinates is not a prime number. Being the commutative operation symmetric pairs that also satisfy the conjecture need not be analyzed; then, only half the elements of $\Gamma(2m)$ need to be traversed. Another present relationship is that, in the blue column, going through the coprimes smaller than $m$, an ascending order is observed; nevertheless, in the magenta column, the co-opposites are ordered in a decreasing manner, but the coprimes greater than $m$ are located there. This feature is important for proving the central outcome.

In the next section, images are presented that show the multiple symmetries, that are obtained in the operation $\odot$; furthermore, when searching for the cyclical subgroups of the $\langle \Gamma(2m) , \odot , 1 \rangle $, group of the present work, some patterns are observed that should be further explored.

\section{The beauty of co-primes}

The algorithm was implemented in Matlab 2020b; in addition, the operation was programmed for $\odot$ over $\Gamma(2m)$. The user can enter the desired $2m$ pair, with the aim of guaranteeing the Abelian group structure. The positions where the operation results in the co-opposite $2m - 1$ of module $1$.


Different symmetries are observed when taking each element of $\Gamma(2m)$, which are the result of applying properties proving on congruences such as distributivity and $2m \equiv 0$   $(mod \, 2m)$.

However, some properties are presented that muestran these symmetries in $\Gamma(2m)$, which can be generalized to the congruencies $mod \, m$ of the $\mathbb{Z}$ in general.

\begin{theorem}\label{teo13}
Let $m \in \mathbb{N}^+$ and lets $\overline{x}, \overline{y} \in \Gamma(2m)$, be arbitrary, such that:
\begin{enumerate}

\item  If $ m \equiv 0 (mod 2)$, then we have:

\begin{itemize}
\item \textbf{Symmetry A:}  $\overline{x} \odot \overline{y} = \left(\overline{2m -x}\right)\odot \left(\overline{2m -y}\right)$

\item \textbf{Symmetry B:}  $\overline{x} \odot \overline{y} = \left(\overline{m -x}\right)\odot \left(\overline{m -y}\right)$

\item \textbf{Symmetry C:}  $\overline{x} \odot \overline{y} = \left(\overline{m +x}\right)\odot \left(\overline{m +y}\right)$

\end{itemize}

\item  If $ m \equiv 0 (mod 2)$, then we have:

\begin{itemize}

\item \textbf{Symmetry A:}  $\overline{x} \odot \overline{y} = \left(\overline{2m -x}\right)\odot \left(\overline{2m -y}\right)$

\item \textbf{Symmetry D:}  $\overline{x} \odot \overline{y} = \left(\overline{m - x}\right)\odot \left(\overline{m -y}\right) - \overline{m}$

\item \textbf{Symmetry E:}  $\overline{x} \odot \overline{y} = \left(\overline{m + x}\right)\odot \left(\overline{m +y}\right) + \overline{m}$ 

\end{itemize}

\end{enumerate}

\end{theorem}



Figure \ref{fig4}, for example, shows the symmetries of the operation for $\Gamma(2m = 296)$ and the $m = 148$, and the symmetries of the operation for $\Gamma(2m = 450)$ and the $m = 225$ is odd with the axes of symmetries are shown.

\begin{figure}[tb]
\includegraphics[scale=0.32]{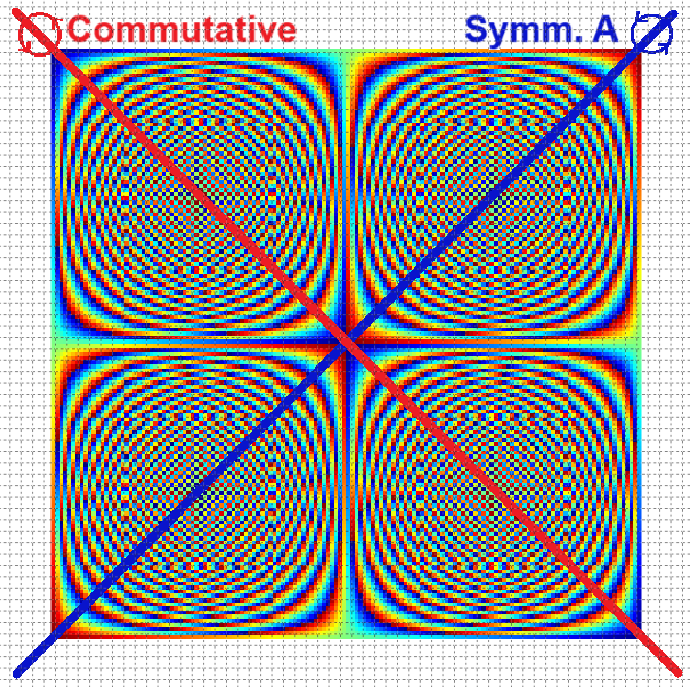}
\includegraphics[scale=0.4]{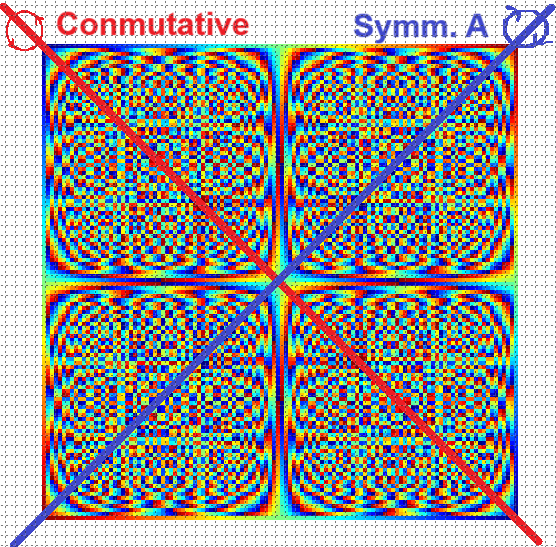}
\caption{The Cayley tables of $\odot$ acting on $\Gamma(2m = 296)$ are shown in the image on the left, and the $\Gamma(2m = 450)$, in the image on the right.}\label{fig4}
\end{figure}

Figure \ref{fig4b} shows how symmetries can be used to reach elements of higher levels, if $\Gamma(m) \subseteq \Gamma(2m) \subseteq \Gamma(2^2m) \subseteq \cdots $.

\begin{figure}[tb]
\includegraphics[scale=0.6]{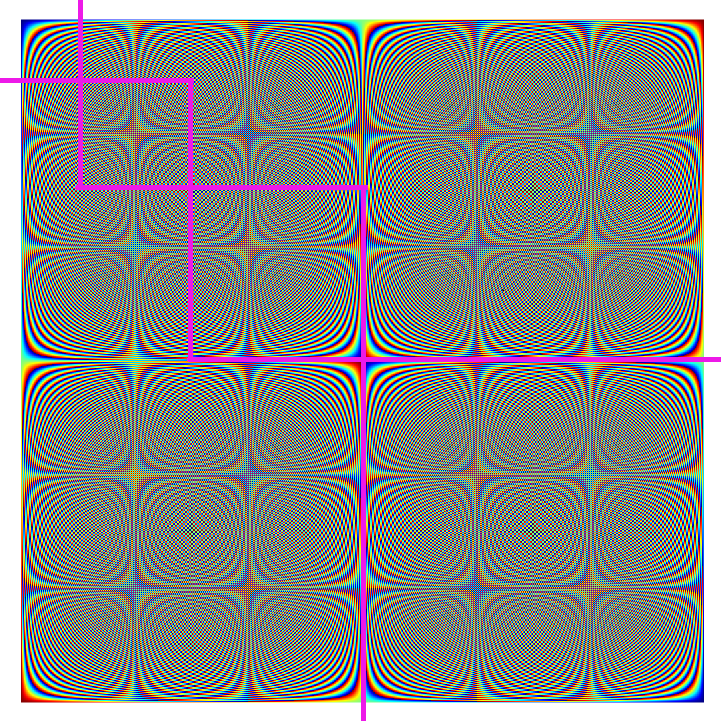}
\caption{The Cayley tables of $\odot$ acting on $\Gamma(2m = 256),\Gamma(2m = 512)$,$\Gamma(2m = 1024)$,$\Gamma(2m = 2048)$ The magenta lines show the fit.}\label{fig4b}
\end{figure}

By constructing the function $\Theta(z) = \overline{2m - z}$ and composing it we can pass $\Gamma(m)\longrightarrow \Gamma(2m)\longrightarrow \Gamma(2^2m)\longrightarrow \cdots \text{etc}$, the idea is to translate pairs that satisfy the property from the level $m$ to the level $2m$ and in turn to $2^2m$ and so on. But, you can also lower the property from high levels to lower levels.

The different cyclical subgroups were calculated for the example of $\Gamma(2m = 296)$. There are cyclical subgroups of order: $1, 2, 3, 4, 6, 9, 12, 18$ and $36$. The patterns they generate showed some elements of these groups. In the table \ref{tabla1} they are painted in different colors, according to the order of the generating element. Locating the result in figures \ref{fig5}-\ref{fig6}, this exercise can be identified.

\begin{figure}[tb]
\includegraphics[scale=0.3]{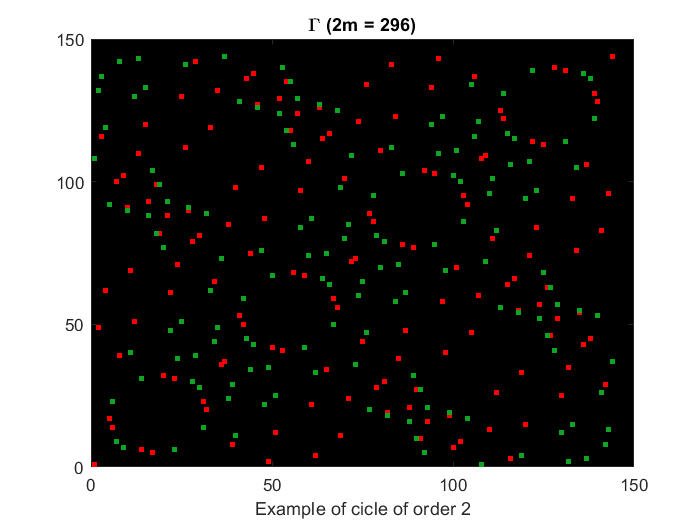}
\includegraphics[scale=0.3]{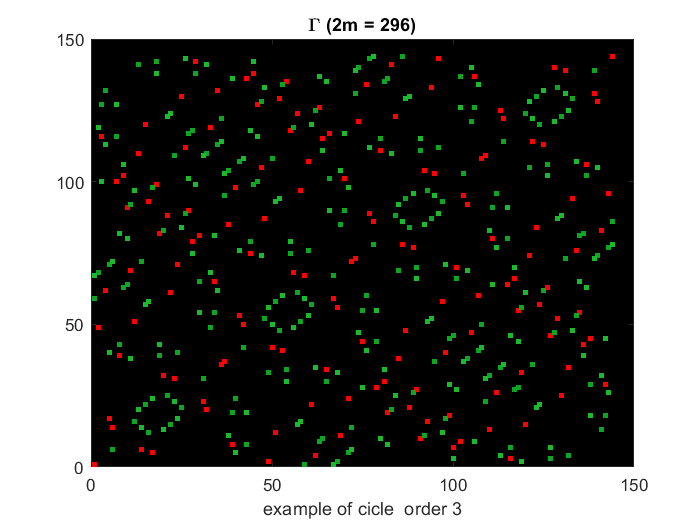}
\includegraphics[scale=0.3]{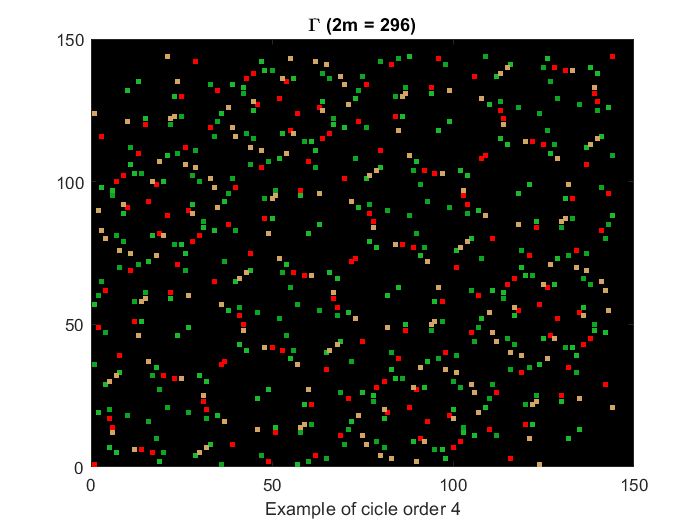}
\includegraphics[scale=0.3]{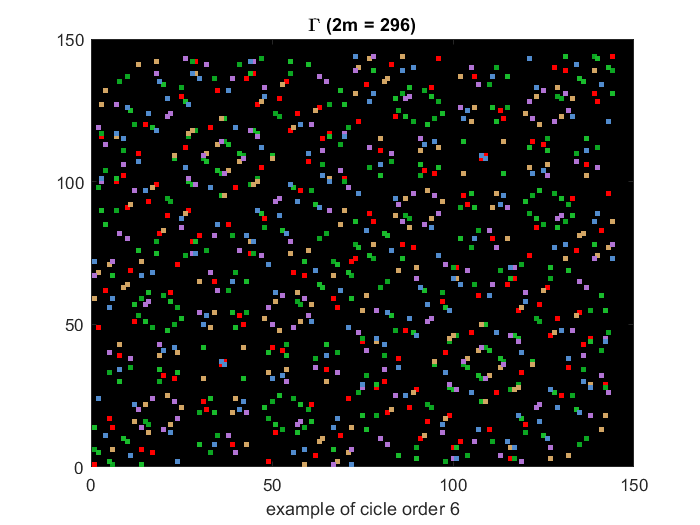}
\includegraphics[scale=0.3]{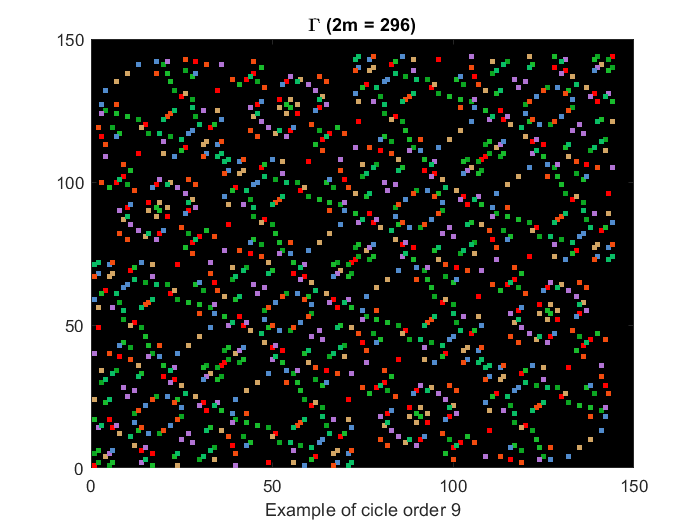}
\includegraphics[scale=0.3]{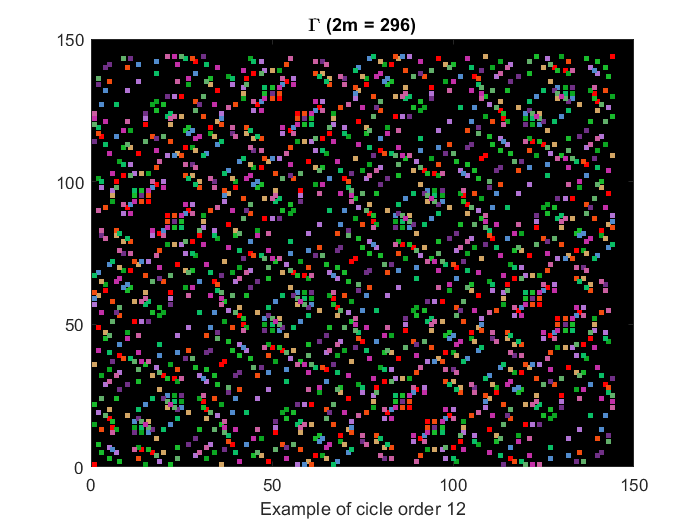}
\includegraphics[scale=0.3]{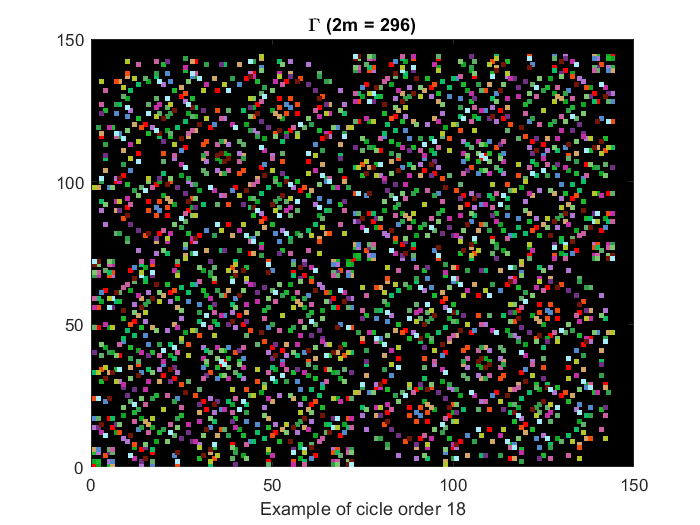}
\includegraphics[scale=0.3]{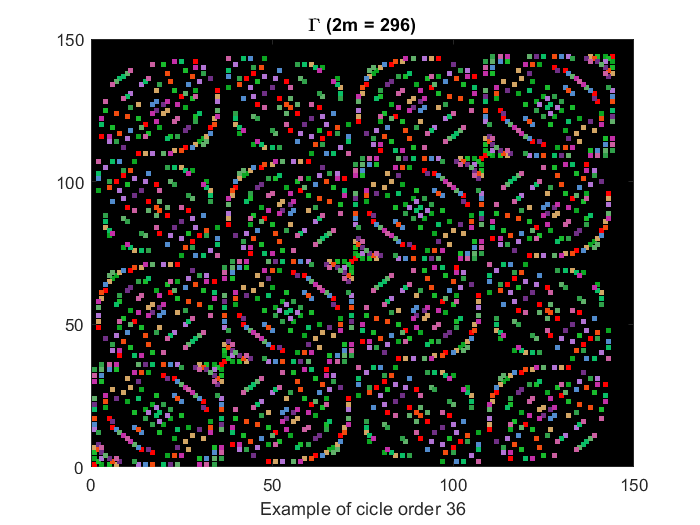}

\caption{Patterns of the cycles of order 2,3,4,6,9,12,18,36 for the structure $\langle \Gamma(2m = 296) , \odot , 1 \rangle$ }\label{fig5}
\end{figure}

The following images show the symmetries, patterns present in the orbits of cyclic subgroups for $\Gamma(2m = 180)$.

\begin{figure}[tb]
\includegraphics[scale=0.3]{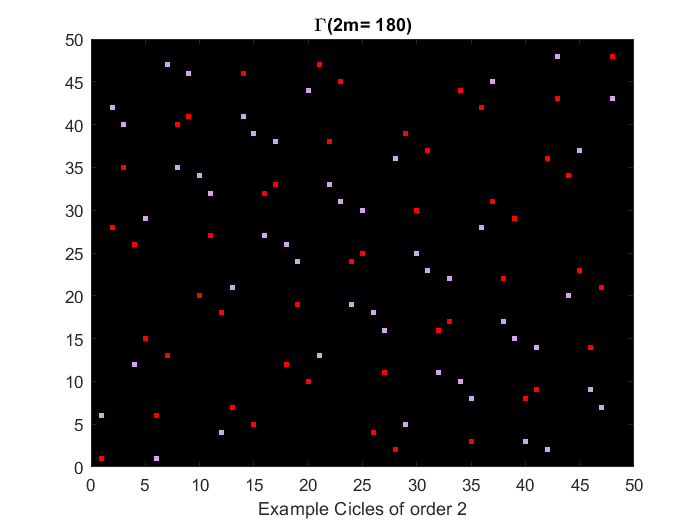}
\includegraphics[scale=0.3]{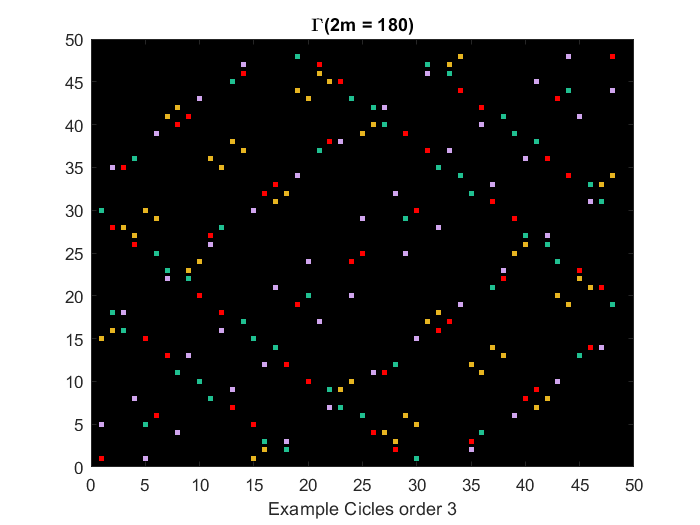}
\includegraphics[scale=0.3]{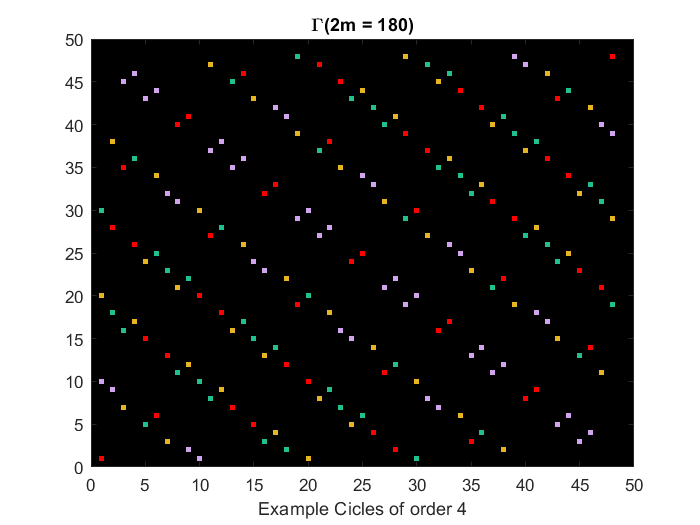}
\includegraphics[scale=0.3]{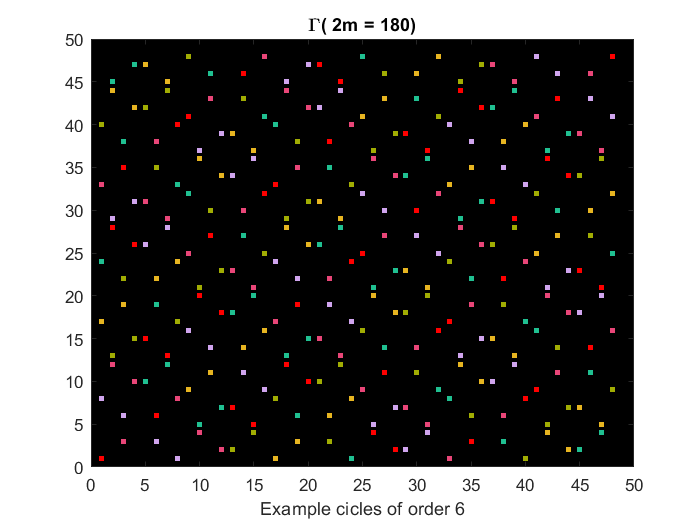}
\includegraphics[scale=0.3]{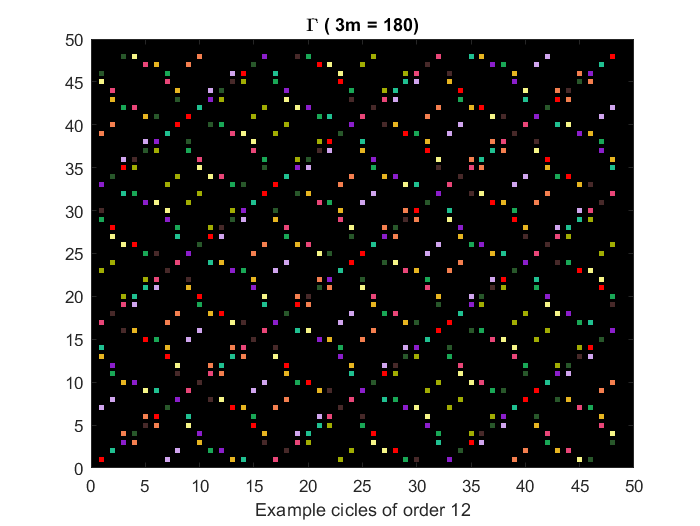}
\includegraphics[scale=0.3]{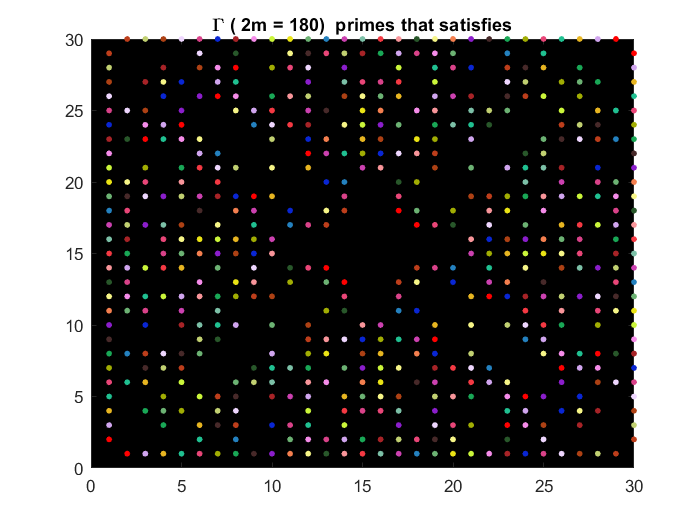}
\caption{Patterns of the cycles of order $2,3,4,6,12$ and the primes that satisfy the conjecture for the structure $\langle \Gamma(2m = 180) , \odot , 1 \rangle$ }\label{fig6}
\end{figure}

The following section presents the most powerful results of this research, which thanks to the identified symmetries allows us to visualize that the conjectures may be true.

\section{Proof of Goldbach's Conjecture}

The Theorem \ref{teo11} is presented in the section, which is the main result of this investigation. When reviewing the possible regions where the conjecture is fulfilled, one must search in the line $x + y = 2m$, when solving for $y$ as a function of $x$. The search is refined when $x$ is prime, leading to dealing with primes between $1$ and $2m$; for $x$ and, for $y$, their values vary between $1$ and $2m$. It should select those and values that are prime. 

Figure \ref{fig3} shows all the co-primes with blue for the number $2m$ and the non-coprimes in magenta. The couples that comply with a circle and a cross are also marked if the number is also the co-opposite of $x$. 

\begin{figure}[tb]
\includegraphics[scale=0.5]{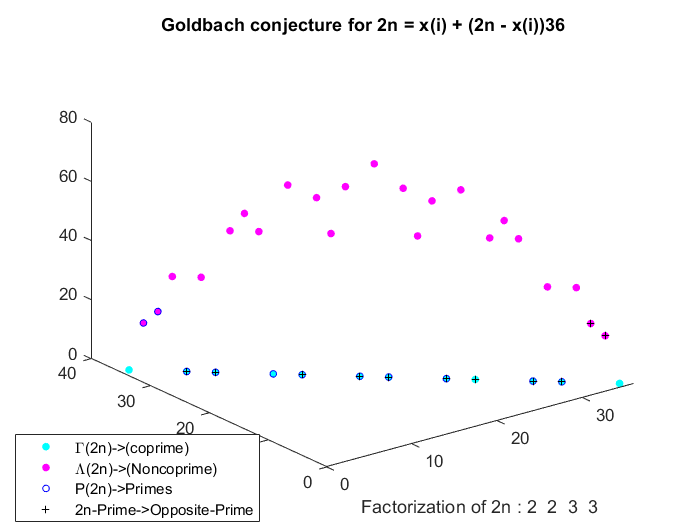}
\caption{In $2m = 36$ the coprimes are in blue and the non-coprimes are in magenta. Those that satisfy the conjecture have a circle with a cross}\label{fig3}
\end{figure}

In the previous section, the operation $\odot$ on $\Gamma (2m)$ was built. Some properties that are necessary for the proof of the conjecture were demonstrated.

\begin{theorem}[Goldbach's Conjecture]\label{teo11}
$\forall m > 2 \exists p , q \in \mathbb{P} \left( p + q = 2m \right)$, that is, "for all natural $m \geq 2$, there exist primes $p, q$, such that $p + q = 2m$". 
\end{theorem}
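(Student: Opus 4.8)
The plan is to move the statement into the abelian group $\langle \Gamma(2m),\odot,\overline{1}\rangle$ of Theorem \ref{teo4} by means of the order isomorphism $\theta\colon\gamma(2m)\to\Gamma(2m)$, $\theta(s)=\overline{s}$, of Definition \ref{def8}. Since $\theta$ is an isomorphism of posets, each class $\overline{s}$ with $\overline{1}\preceq\overline{s}\preceq\overline{2m-1}$ has a unique representative in $\gamma(2m)$, and that representative is prime exactly when the natural number it names is; so ``being a class of a prime'' is well defined on $\Gamma(2m)$, and by Lemma \ref{lema1} at least one such class exists. The structural fact I would exploit is that, by step 2 of Theorem \ref{teo8}, the co-opposite assignment $\overline{s}\mapsto\overline{2m-s}$ coincides with the group translation $\overline{s}\mapsto\overline{2m-1}\odot\overline{s}$; since $\overline{2m-1}$ is an involution (Theorem \ref{teo5}) and has no fixed class (a fixed point would force $2s\equiv 0\pmod{2m}$, i.e. $m\mid s$, impossible for $s$ coprime to $2m$ when $m>1$), this translation is a fixed-point-free involution of $\Gamma(2m)$, and by Theorem \ref{teo8} it is order-reversing and interchanges the lower block $L=\{\overline{s}:\overline{1}\preceq\overline{s}\preceq\overline{m}\}$ with the upper block $U=\{\overline{s}:\overline{m}\preceq\overline{s}\preceq\overline{2m-1}\}$. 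Hence $\Gamma(2m)$ is partitioned into $\phi(2m)/2$ co-opposite pairs $\{\overline{s},\overline{2m-s}\}$, one member in $L$ and one in $U$ — exactly the pairs in the blue/magenta columns of the Cayley table (Remark \ref{remar4}).

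\textbf{Reduction.} Next I would reduce Goldbach for $2m$ to a statement purely about these pairs. If $m$ is prime, then $2m=m+m$ already exhibits the required primes, so assume $m$ is composite (hence $m\geq 4$, $2m\geq 8$). I claim every representation $p+q=2m$ with $p,q$ prime then has $p,q\in\gamma(2m)$: if a summand divided $2m$ it would be either $2$ (forcing $q=2m-2=2(m-1)$, not prime for $m>2$) or an odd prime $p\mid m$ (forcing $p\mid q$, hence $q=p$ and $m=p$ prime, a contradiction). Therefore, for composite $m$, Goldbach's assertion for $2m$ is \emph{equivalent} to the statement that some co-opposite pair $\{\overline{p},\overline{2m-p}\}$ of $\Gamma(2m)$ has \emph{both} entries equal to classes of primes; equivalently, that the fixed-point-free involution $\overline{s}\mapsto\overline{2m-1}\odot\overline{s}$ carries at least one ``prime class'' of $L$ to a ``prime class'' of $U$.

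\textbf{Main obstacle.} The remaining step — ruling out that this involution sends \emph{every} prime class of $L$ onto a composite coprime of $U$ — is where I expect the real difficulty, and I do not see the algebraic scaffolding alone delivering it. The group law, the involution $\overline{2m-1}$, the order reversal and the Cayley-table symmetries pin down precisely \emph{where} the Goldbach pairs must sit, but they do not by themselves force one of them to be doubly prime. The first thing to try is pigeonhole: $\gamma(2m)$ contains $\pi(2m)-1-t$ primes (all primes in $(2,2m)$ not dividing $m$, with $t$ the number of odd primes dividing $m$) out of $\phi(2m)$ elements, so if this exceeded $\phi(2m)/2$ some co-opposite pair would be prime--prime; but $\phi(2m)$ can be of order $2m/\log\log 2m$ while $\pi(2m)\sim 2m/\log 2m$, so for large $m$ the primes are a vanishing fraction of $\gamma(2m)$ and the pigeonhole collapses. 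Multiplying a pair by a unit of $\Gamma(2m)$ permutes the co-opposite pairs but destroys primality, so that does not help either. What is really needed is a \emph{lower bound} $r_2(2m)\geq 1$ for the number of prime--prime co-opposite pairs, i.e. for the number of Goldbach representations, and the honest assessment is that this requires sieve- or circle-method input (an effective version of the expected lower bound of order $2m/\log^2 2m$ times the relevant arithmetic factor): converting the visible symmetry of $\odot$ into such a quantitative non-vanishing statement is the crux, and since that statement is essentially Goldbach itself, it is exactly here that any genuine proof must do its work.
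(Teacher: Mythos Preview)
Your setup mirrors the paper's almost exactly: you build $\Gamma(2m)$, identify the co-opposite involution $\overline{s}\mapsto\overline{2m-1}\odot\overline{s}=\overline{2m-s}$, and reduce Goldbach for $2m$ (with $m$ composite) to the claim that some co-opposite pair $\{\overline{p},\overline{2m-p}\}$ consists of two prime classes. You then stop, correctly, at the point where this last claim must actually be established, and diagnose that the group structure alone does not furnish it.

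The paper does not stop there. Its reductio assumes every prime $x\le m'$ coprime to $2m'$ has $2m'-x$ composite, writes $2m'-x=s\cdot t$ with $1<s,t<2m'$, passes to classes to obtain $\overline{x}=\overline{2m'-st}=\overline{(2m'-s)\cdot t}$, and then ``applies $\theta^{-1}$'' to conclude $x=(2m'-s)\cdot t$ \emph{as an integer}, hence $x$ is composite---the desired contradiction. The fallacy is precisely this last step: $\theta^{-1}$ returns the unique representative of a class lying in $\{1,\dots,2m'-1\}$, which is $x$ itself; the integer $(2m'-s)t$ is generally larger than $2m'$ and is only \emph{congruent} to $x$ modulo $2m'$, which says nothing about the factorization of $x$ over $\mathbb{Z}$. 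For a concrete witness take $2m'=36$, $x=11$: then $2m'-x=25=5\cdot 5$, and $(2m'-s)t=31\cdot 5=155\equiv 11\pmod{36}$, while $11$ remains prime. So the gap you flagged is exactly the one the paper glosses over with an illegitimate passage from $\Gamma(2m')$ back to $\mathbb{Z}$; your pigeonhole and density remarks are the correct explanation of why no argument of this purely algebraic shape can close it.
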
 

\begin{proof}
\textbf{Reductio ad absurdum} Let's suppose that, there is $2m' > 4$, such that, for all $p, q \in \mathbb{P}$, we have to have $p + q \neq 2m'$ , which is equivalent to the fact that the ordered pairs $(p,q)\in \mathbb{P} \times \mathbb{P}$ are not on the line $x + y = 2m'$, solving for $y$ in terms of $x$, one has that of all the pairs $(x, 2m' -x)$ that are on the line, one of the two coordinates must not be a prime number. By constructing the group $\left\langle \Gamma(2m'), \odot, \overline{1}\right\rangle $ for $2m'>2$ by theorem \ref{teo4}, where are realized in the Cayley table the operation $\odot$. In the columns $\overline{1}$ and $\overline{2m' -1}$, find all the pairs of coprimes that are in line $x + y = 2m '$ and which are candidates to be the pairs that satisfy the conjecture. On the other hand, the Lemma \ref{lema1} guarantees the existence of primes in $\gamma(m')$, therefore, taking all the primes $x \in \gamma(2m')$, such that, we have that $1 \leq x \leq m'$ are coprime with $m'$, and their co-opposites $2m' - x \in \gamma(2m')$ must not be prime by hypothesis, Thus, all $2m ' - x$ between $m'$ and $2m'-1$ must be composite, this implies that for each $2m'-x$ in this region, there exist $s,t \in \gamma(2m')$ with $1<s <2m'$ and $1<t <2m'$ such that $2m' - x = s\cdot t$, applying the isomorphism $\theta$ by definition \ref{def8}, we have that, $\overline {m'} \preceq \overline{2m' - x}=\overline{s\cdot t} \preceq \overline{2m'-1}$. By theorem \ref{teo8} operating each term of the last inequality by $\overline{2m' - 1}$ through $\odot$, we have that, $\overline{1} \preceq \overline{x} = \overline{2m' -st} \preceq \overline{m'}$, which is equivalent by definition \ref{def6} to the inequality $\overline{1} \preceq \overline{x}= \overline{2m ' -s}\odot \overline{t} \preceq \overline{m'}$. Applying now the isomorphism $\theta^{-1}$ of poset, we have that $1 \leq x=(2m' -s)\cdot t \leq m'$, which means that all these coprimes $x$ between $1$ and $m'$ are composite numbers in $\gamma(m)$, absurd since it contradicts the existence of primes in this area, this is contradicting the Lemma \ref{lema1}.
\end{proof}

As a direct consequence of the proof of the previous theorem we have.

\begin{corollary}\label{coro1}
$\forall m > 2\exists p \in \mathbb{P}\left(  m < p < 2m\right) $.
\end{corollary}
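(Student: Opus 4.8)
The plan is to read the statement off from the \emph{proof} of Theorem~\ref{teo11} rather than from its bare statement. Fix $m>2$, so that $2m>4$, and keep in view the group $\langle\Gamma(2m),\odot,\overline{1}\rangle$ together with the isomorphism $\theta$ of Definition~\ref{def8}.

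Recall how Theorem~\ref{teo11} was obtained: assuming Goldbach fails for $2m$, one argues that for \emph{every} prime $x\in\gamma(2m)$ with $1\le x\le m$ the co-opposite $2m-x$ is composite, hence $\overline{m}\preceq\overline{2m-x}=\overline{s\cdot t}\preceq\overline{2m-1}$ for suitable $s,t$; transporting this inequality through Theorem~\ref{teo8} and then applying $\theta^{-1}$ forces every coprime of $m$ lying strictly below $m$ to be composite, contradicting Lemma~\ref{lema1}. Running this reasoning forward instead of toward a contradiction, I would extract from it the positive fact that there exists a prime $x$ with $1\le x<m$, $\gcd(x,2m)=1$, and $2m-x\in\mathbb{P}$. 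Here $x\neq m$ is automatic, since $\gcd(m,2m)=m>1$ prevents $m$ from lying in $\gamma(2m)$; and $x$ is odd, being coprime to the even number $2m$, so in fact $x\ge 3$ and $x<m$ strictly.

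Then I would simply set $p:=2m-x$. From $x<m$ we get $p>m$, from $x\ge 3$ we get $p<2m$, and $p$ is prime by construction; hence $m<p<2m$ with $p\in\mathbb{P}$, which is exactly the assertion of Corollary~\ref{coro1}. (As a by-product one also sees that the Goldbach pair for $2m$ supplied by Theorem~\ref{teo11} can be taken with one member below $m$ and the other above it.)

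The one delicate point — and the reason I insist on working from the proof rather than the statement of Theorem~\ref{teo11} — is the ``diagonal'' representation $2m=m+m$: if that were the only one available, neither summand would lie strictly inside $(m,2m)$, and moreover $m\notin\gamma(2m)$ means the coprime machinery never even sees it (this is precisely what occurs for $2m=6$, where $6=3+3$). Extracting $x$ from inside the proof sidesteps this, because there $x$ is coprime to $2m$ and therefore $\neq m$. For the single small value $m=3$ I would just verify the claim by hand, namely $p=5$ satisfies $3<5<6$. I expect this bookkeeping around the diagonal case to be the only real obstacle; the rest is a routine reuse of the order $\preceq$, Theorem~\ref{teo8}, and the isomorphism $\theta$ already in place.
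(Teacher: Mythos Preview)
Your proposal is essentially the paper's own argument: the paper offers no explicit proof of Corollary~\ref{coro1}, stating only that it is a direct consequence of the proof of Theorem~\ref{teo11}, and you have supplied precisely the natural reading of that phrase---locating a Goldbach summand strictly below $m$ inside $\gamma(2m)$ and taking its co-opposite as the required prime in $(m,2m)$. Your extra bookkeeping around the diagonal representation $2m=m+m$ and the hand check at $m=3$ goes beyond what the paper records but does not alter the underlying method.
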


\section{Proof of twins primes conjecture}

The purpose of this section is to demonstrate the twin primes conjecture, to visualize the ideas is presented in the following Table \ref{tabla2}, shows the operation $\odot$ on $\Gamma ( 2m = 26)$; in addition to having beautiful symmetries, the following properties are observed

\begin{definition}\label{def9}
Given $p, q \in \mathbb{P}$, it is said that $p,q$ are twins primes, if and only if, $\vert p - q \vert = 2$.
We will notice for $\mathbb{F} = \left\lbrace (p,q) \in \mathbb{P}\times \mathbb{P} : {p,q} \text{is a pair of twins primes} \right \rbrace$ to the set formed by pairs of twins prime.
\end{definition}

\begin{remark}\label{remark5}
If $p \in \mathbb{P}$ then $\phi(2p)=\phi(p) = p-1 $, when explicitly calculating $\gamma(2p)$, it is observed that they are the odd numbers less than $2p$, but for there to be at least the first pair of twin primes between $1$ and $p$, $p\geq 7$ must be. Note this result in the following example $\phi(2p=26)=\phi(p=13)=12$, in detail, $\gamma(2p=26)=\left\lbrace 1,3,5, 7,9,11,13,15,17,19,21,23,25 \right\rbrace $
\end{remark}

\begin{example}\label{ejemp3}
Given $2m = 26$, calculating $\Gamma (2m)$, the set of coprimes of $2m$, and the result of operating $\odot$ on $\Gamma(2m)$, is obtained, is the reflection that the properties validate for any pair $2m$, with $m \geq 7$, proved in the previous section:
\begin{enumerate}

\item $\Gamma(26) = \left\lbrace \overline{1},\overline{3},\overline{5},\overline{7},\overline{9},\overline{11},\overline{15},\overline{17},\overline{19},\overline{21},\overline{23},\overline{25}\right\rbrace$.

\item It is observed $\odot$ in table\ref{tabla2}.

	\begin{table}[ht]
		\centering
		\begin{tabular}{|>{\columncolor{gray!30}}c|>{\columncolor{blue!30}}c|c|c|c|c|>{\columncolor{blue!30}}c|>{\columncolor{magenta!30}}c|c|c|c|c|>{\columncolor{magenta!30}}c|}
			\hline
			\rowcolor{gray!30}
			\cellcolor{white}{$\odot$} & $\overline{1}$ & $\overline{3}$ & $\overline{5}$ & $\overline{7}$ & $\overline{9}$ & $\overline{11}$ & $\overline{15}$ & $\overline{17}$ & $\overline{19}$ & $\overline{21}$ & $\overline{23}$ & $\overline{25}$\\
			\hline
			\rowcolor{blue!30}
			\cellcolor{gray!30}{$\overline{1}$} & \cellcolor{green!20}{$\overline{1}$} & $\overline{3}$ & $\overline{5}$ & $\overline{7}$ & $\overline{9}$ & $\overline{11}$ & $\overline{15}$ & $\overline{17}$ & $\overline{19}$ & $\overline{21}$ & $\overline{23}$ & \cellcolor{orange!20}{$\overline{25}$} \\
			\hline
			$\overline{3}$ & $\overline{3}$ & $\overline{9}$ & $\overline{15}$ & $\overline{21}$ & \cellcolor{green!20}{$\overline{1}$} & $\overline{7}$ & $\overline{19}$ & \cellcolor{orange!20}{$\overline{25}$} & $\overline{5}$ & $\overline{11}$ & $\overline{17}$ & $\overline{23}$\\
			\hline
			$\overline{5}$ & $\overline{5}$ & $\overline{15}$ & \cellcolor{orange!20}{$\overline{25}$} & $\overline{9}$ & $\overline{19}$ & $\overline{3}$ & $\overline{23}$ & $\overline{7}$ & $\overline{17}$ & \cellcolor{green!20}{$\overline{1}$} & $\overline{11}$ & $\overline{21}$\\
			\hline
			$\overline{7}$ & $\overline{7}$ & $\overline{21}$ & $\overline{9}$ & $\overline{23}$ & $\overline{11}$ & \cellcolor{orange!20}{$\overline{25}$} & \cellcolor{green!20}{$\overline{1}$} & $\overline{15}$ & $\overline{3}$ & $\overline{17}$ & $\overline{5}$ & $\overline{19}$\\
			\hline
			$\overline{9}$ & $\overline{9}$ & \cellcolor{green!20}{$\overline{1}$} & $\overline{19}$ & $\overline{11}$ & $\overline{3}$ & $\overline{21}$ & $\overline{5}$ & $\overline{23}$ & $\overline{15}$ & $\overline{7}$ & \cellcolor{orange!20}{$\overline{25}$} & $\overline{17}$\\
			\hline
			\rowcolor{blue!30}
			\cellcolor{gray!30}{$\overline{11}$} & $\overline{11}$ & $\overline{7}$ & $\overline{3}$ & \cellcolor{orange!20}{$\overline{25}$} & $\overline{21}$ & $\overline{17}$ & $\overline{9}$ & $\overline{5}$ & \cellcolor{green!20}{$\overline{1}$} & $\overline{23}$ & $\overline{19}$ & $\overline{15}$\\
			\hline
			\rowcolor{magenta!30}
			\cellcolor{gray!30}{$\overline{15}$} & $\overline{15}$ & $\overline{19}$ & $\overline{23}$ & \cellcolor{green!20}{$\overline{1}$} & $\overline{5}$ & $\overline{9}$ & $\overline{17}$ & $\overline{21}$ & \cellcolor{orange!20}{$\overline{25}$} & $\overline{3}$ & $\overline{7}$ & $\overline{11}$\\
			\hline
			$\overline{17}$ & $\overline{17}$ & \cellcolor{orange!20}{$\overline{25}$} & $\overline{7}$ & $\overline{15}$ & $\overline{23}$ & $\overline{5}$ & $\overline{21}$ & $\overline{3}$ & $\overline{11}$ & $\overline{19}$ & \cellcolor{green!20}{$\overline{1}$} & $\overline{9}$\\
			\hline
			$\overline{19}$ & $\overline{19}$ & $\overline{5}$ & $\overline{17}$ & $\overline{3}$ & $\overline{15}$ & \cellcolor{green!20}{$\overline{1}$} & \cellcolor{orange!20}{$\overline{25}$} & $\overline{11}$ & $\overline{23}$ & $\overline{9}$ & $\overline{21}$ & $\overline{7}$\\
			\hline
			$\overline{21}$ & $\overline{21}$ & $\overline{11}$ & \cellcolor{green!20}{$\overline{1}$} & $\overline{17}$ & $\overline{7}$ & $\overline{23}$ & $\overline{3}$ & $\overline{19}$ & $\overline{9}$ & \cellcolor{orange!20}{$\overline{25}$} & $\overline{15}$ & $\overline{5}$\\
			\hline
			$\overline{23}$ & $\overline{23}$ & $\overline{17}$ & $\overline{11}$ & $\overline{5}$ & \cellcolor{orange!20}{$\overline{25}$} & $\overline{19}$ & $\overline{7}$ & \cellcolor{green!20}{$\overline{1}$} & $\overline{21}$ & $\overline{15}$ & $\overline{9}$ & $\overline{3}$\\
			\hline
			\rowcolor{magenta!30}
			\cellcolor{gray!30}{$\overline{25}$} & \cellcolor{orange!20}{$\overline{25}$} & $\overline{23}$ & $\overline{21}$ & $\overline{19}$ & $\overline{17}$ & $\overline{15}$ & $\overline{11}$ & $\overline{9}$ & $\overline{7}$ & $\overline{5}$ & $\overline{3}$ & \cellcolor{green!20}{$\overline{1}$}\\
			\hline
		\end{tabular}
		\caption{$\odot$ on $\Gamma(36)$}\label{tabla2}
	\end{table} 

\end{enumerate}
\end{example}

\begin{remark}\label{remar6}

Table \ref{tabla2}, shows the operation $\odot$ on $\Gamma ( 2m = 26)$; in addition to having beautiful symmetries, the following properties are observed:

\begin{enumerate}
\item $\odot$ is therefore closed, the results of the operation at $\overline{a} \odot \overline{b} \in \Gamma(2m)$, for each $\overline{a}, \overline{b} \in \Gamma(2m)$

\item The magenta y column guarantees property (b) of Theorem \ref{teo5}, since operating any element $\overline{x} \in \Gamma(2m)$ with $\overline{2m - 1}$ through $\odot$, gives its co-opposite $\overline{2m - x}$ as a result. Furthermore, while the elements of the module $\overline{1}$, column increase, the elements of the $\overline{2m - 1}$, column decrease. Just to fix ideas, taking the element $x = \overline{15}$, and operating it with $\overline{2m - 1}$, it results in $\overline{11}$, which is the co-opposite of $\overline{15}$, since $\overline{2m - 15} = \overline{11}$.
\item In the dark blue and magenta rows and columns, it can be seen that property (c) of Theorem \ref{teo5} holds; for example, when taking $a_{m} = \overline{11}$, and operating it with  $\overline{2m - 1}$ through $\odot$, it gives its co-opposite $2m - a_{m} = \overline{15} = a_{m+1}$ as a result.

\end{enumerate}
     
\end{remark}

Let's observe in the example that, the $(\overline{3},\overline{5}),(\overline{5},\overline{7}),(\overline{17},\overline{19})$, couples satisfy the conjecture and these are found in the blue and magenta columns. There are also the $(\overline{7},\overline{9}),(\overline{15},\overline{17})$, couples that, although their distance is two, do not satisfy the conjecture, because one of the coordinates is not a prime number. Being the commutative operation, symmetric pairs that also satisfy the conjecture need not be analyzed; then, only half the elements of $\Gamma(2m)$, need to be traversed. Another present relationship is that, in the blue column, going through the coprimes smaller than $m$, an ascending order is observed; nevertheless, in the magenta column, the co-opposites are ordered in a decreasing manner, but the coprimes greater than $m$, are located there. This feature is important for proving the central outcome.

\begin{theorem}[Twins primes Conjecture]\label{teo12}
There are infinitely many twins primes. That is, $ \vert \mathbb{F} \vert > \infty$, where $\mathbb{F}$, as definition \ref{def9}.
\end{theorem}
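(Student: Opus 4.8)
The plan is to recast a twin pair inside the abelian group $\left\langle\Gamma(2m),\odot,\overline{1}\right\rangle$ and then run an argument parallel to the proof of Theorem~\ref{teo11}. First I would fix the dictionary between twin primes and the group: if $p$ and $p+2$ are both prime, put $2m:=2(p+1)$, so $p=m-1$ and $p+2=m+1$; since $p$ is odd, $m$ is even, whence $\gcd(m-1,2m)=\gcd(m+1,2m)=1$, and therefore $\overline{m-1},\overline{m+1}\in\Gamma(2m)$. Because $m+1=2m-(m-1)$, the class $\overline{m+1}$ is precisely the co-opposite of $\overline{m-1}$, and a short computation of the kind used for Theorem~\ref{teo5} gives $\overline{m-1}\odot\overline{m-1}=\overline{m+1}\odot\overline{m+1}=\overline{1}$ whenever $m$ is even. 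Consequently $|\mathbb{F}|=\infty$ if and only if there are infinitely many even $m$ for which the ``central'' co-opposite pair $(\overline{m-1},\overline{m+1})$ of $\Gamma(2m)$ has both of its least positive representatives prime --- the pair occupying the two columns flanking the diagonal block of the Cayley table, just as $(\overline{17},\overline{19})$ does for $2m=36$.

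With this reduction I would argue by reductio ad absurdum, exactly as for Goldbach's Conjecture. Suppose $|\mathbb{F}|<\infty$; then there is $M$ so that for every even $m>M$ at least one of $m-1$, $m+1$ is composite. Fix such an $m$ and work in $\Gamma(2m)$: if, say, $m-1=s\cdot t$ with $1<s<m-1$ and $t=(m-1)/s$ (both automatically coprime to $2m$), then the isomorphism $\theta$ of Definition~\ref{def8} gives $\overline{1}\preceq\overline{m-1}=\overline{s\cdot t}\preceq\overline{m}$, and operating each term by $\overline{2m-1}$ through $\odot$, as Theorem~\ref{teo8} permits, yields $\overline{m}\preceq\overline{m+1}=\overline{2m-s}\odot\overline{t}\preceq\overline{2m-1}$; transporting back by $\theta^{-1}$ then displays $m+1$ as a product $(2m-s)\cdot t$ of elements of $\gamma(2m)$, i.e.\ as composite, and symmetrically if $m+1$ was the composite one. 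Carrying this out over the infinite family of admissible moduli, and using the monotonicity of the $\overline{1}$- and $\overline{2m-1}$-columns noted after Example~\ref{ejemp3} together with Corollary~\ref{coro1}, the goal is to force every prime supplied by Lemma~\ref{lema1} in some window $(m',2m')$ into the composite part of $\gamma(m')$, contradicting Lemma~\ref{lema1}.

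The step I expect to be the real obstacle is precisely this last transport. For Goldbach it sufficed that \emph{some} prime of $\gamma(2m)$ exists and that its co-opposite cannot always be composite --- a statement within reach of Lemma~\ref{lema1}; for twins one needs a prime at the \emph{prescribed} position $m-1$, with the neighbour $m+1$ also prime, and nothing in the present toolkit localizes primes that sharply. So the genuine work is to select the right infinite family $\{2m\}$ (respecting the parity constraint that $m$ be even and discarding the degenerate case $m-1=1$) and the right chain of $\odot$-operations so that the hypothesis ``the central partition of $2m$ fails for every large even $m$'' collapses to ``$\phi(m')=1$ for some $m'>2$'', which Lemma~\ref{lema1} forbids; and, as already implicit in the Goldbach argument, to ensure that the factorization obtained in the finite group $\Gamma(2m)$ genuinely reflects a factorization of the integer $m+1$ (or $m-1$). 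Making that passage uniform in $m$, rather than for one fortunate modulus at a time, is where the difficulty of the twin prime problem really sits.
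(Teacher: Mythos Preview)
Your approach and the paper's diverge at the choice of modulus. You take $2m=2(p+1)$ so that a putative twin pair sits at the centre of $\Gamma(2m)$ as $(\overline{m-1},\overline{m+1})$, and then attempt to run the Goldbach-style reductio on this fixed position across an infinite family of moduli. The paper instead chooses a single large prime $m'>\max S$ (with $S$ the set of first coordinates of the assumed-finite twin set $\mathbb{F}$), works in $\Gamma(2m')$, takes consecutive primes $p,q\in(m',2m')$ --- which by hypothesis satisfy $|p-q|>2$ --- reflects them via $\odot$ by $\overline{2m'-1}$ to $2m'-p,\,2m'-q\in(1,m')$, observes that $|(2m'-p)-(2m'-q)|=|p-q|>2$, and declares this absurd on the grounds that the twin pairs in $S$ lie below $m'$ and have gap exactly~$2$.

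The obstacle you flag yourself is the real one, and it is fatal to both arguments. In your version, the identity $\overline{m+1}=\overline{2m-s}\odot\overline{t}$ is a congruence modulo $2m$, not an equality of integers, so it cannot by itself show that $m+1$ is composite: for $m=10$ one has $9=3\cdot 3$, hence $\overline{11}=\overline{17}\odot\overline{3}$ in $\Gamma(20)$, yet $11$ is prime. The paper's final step is the same kind of non-sequitur: the reflected integers $2m'-p$ and $2m'-q$ are merely two elements of $\gamma(2m')$ below $m'$; nothing forces them to be prime, let alone to belong to $S$, so their having gap greater than $2$ contradicts nothing about $S$. In both routes the machinery of $\langle\Gamma(2m),\odot\rangle$ and the poset isomorphism $\theta$ transports congruence-class data, not primality, and neither argument supplies the localisation of primes at a prescribed position that the twin-prime problem actually requires. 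Your honest acknowledgement of this gap is well placed; the paper's proof does not close it either.
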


\begin{proof}
\textbf{Reductio ad absurdum} suppose that the set $\mathbb{F}$, is finite. Suppose there exists $m' \in \mathbb{P}$, such that, $m' > \max(S)$, where $S = \prod_{1}\mathbb{F} $, is the set of first few coordinates of $\mathbb{F}$, since the primes are not bounded. Then, outside of $S$, there are no twins primes, this implies that $\forall p, q \in \mathbb{P} \left( p,q \not\in S \Rightarrow \vert p - q \vert > 2\right) $, by comment \ref{remark5}, it is found that by taking $\gamma(2m')$, we have that for each pair of primes $p, q \in \mathbb{P}$ and $p, q \not\in S$, satisfying $m' < p < 2m'$ and $m' < q < 2m'$, cannot be twins primes. This implies that at least one of the consecutive coprimes $p,q$ must not be prime, also satisfying that $\vert p - q \vert > 2$, using the theory proposed, when considering the group $\langle \Gamma(2m'), \odot, 1 \rangle $, and the corresponding isomorphism $\theta$ between the posets $\left\langle \gamma(2m'),\leq \right\rangle$, and $\left\langle \Gamma(2m'),\preceq \right\rangle $, thanks to the definition \ref{def8}, applying the isomorphism $\theta$ to the previous inequalities. We obtain $\overline{m'} \prec \overline{p} \prec \overline{2m' - 1}$, and $\overline{m'} \prec \overline{q} \prec \overline{2m' - 1}$. Multiplying these inequalities by $2m' - 1$, with the operation $\odot$, by theorem\ref{teo7} to the theorem \ref{teo8}, we have that, $\overline{m'} \succ \overline{2m' - p} \succ \overline{1}$, and $\overline{m'} \succ \overline{2m' - q} \succ \overline{1}$. Now applying the isomorphism $\theta^{-1}$ to this inequality, it is obtained that $m' > 2m' - p > 1$, and $m' > 2m' - q > 1$, when we calculate the distance of these two consecutive co-opposite numbers, we have that, $\vert (2m' - p) - (2m' - q)\vert = \vert p - q \vert> 2 $, absurd, since it contradicts that for all the primes in $S$, by construction, they are also between $1$ and $m'$, their distance is exactly $2$, when they are consecutive.

\end{proof}

Finally, the most relevant conclusions of this research are indicated.

\section{Conclusions}

The notion of co-primes and non-co-primes was generalized to integers, properties were demonstrated that guarantee that $\langle \Gamma(2m),\odot, 1\rangle $ has an abelian group structure, and they identified symmetries that allow us to visualize how the conjectures can be transferred to larger groups.

Furthermore, demonstrations were made that the Goldbach and twin prime conjectures are true.

\appendix

\section{Proof of Theorems}\label{appendix} 

\begin{theorem}\label{teo1}
For each $m>1$, it is had that $\equiv_{\mathbb{C}(m)}$ is an equivalence relation on $\mathbb{C}(m)$.
\end{theorem}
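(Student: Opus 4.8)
The plan is to verify directly the three defining properties of an equivalence relation—reflexivity, symmetry, and transitivity—for the relation $\equiv_{\mathbb{C}(m)}$ given in Definition \ref{def3}, working throughout inside the set $\mathbb{C}(m) \subseteq \mathbb{Z}$. Since the relation is just the usual congruence modulo $m$ on $\mathbb{Z}$ cut down to the subset $\mathbb{C}(m)$, the essential observation is that restricting an equivalence relation to any subset again yields an equivalence relation; the proof below simply makes this explicit with the witnessing integers $k$.

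First I would prove reflexivity: for an arbitrary $x \in \mathbb{C}(m)$ we have $x - x = 0 = m \cdot 0$, so taking $k = 0 \in \mathbb{Z}$ shows $x \equiv_{\mathbb{C}(m)} x$. Next, symmetry: suppose $x, y \in \mathbb{C}(m)$ with $x \equiv_{\mathbb{C}(m)} y$, so that $x - y = mk$ for some $k \in \mathbb{Z}$; then $y - x = m(-k)$ with $-k \in \mathbb{Z}$, hence $y \equiv_{\mathbb{C}(m)} x$. Finally, transitivity: suppose $x, y, z \in \mathbb{C}(m)$ with $x \equiv_{\mathbb{C}(m)} y$ and $y \equiv_{\mathbb{C}(m)} z$, so $x - y = mk$ and $y - z = ml$ for some $k, l \in \mathbb{Z}$; adding these equalities gives $x - z = m(k + l)$ with $k + l \in \mathbb{Z}$, hence $x \equiv_{\mathbb{C}(m)} z$.

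There is no genuine obstacle here: each step uses only that $\mathbb{Z}$ is closed under negation and addition, and at no point does one leave $\mathbb{C}(m)$, since the hypotheses already supply all elements as members of $\mathbb{C}(m)$. The one point worth stating clearly is that the defining condition $\exists k \in \mathbb{Z}\,(x - y = mk)$ is meaningful for elements of $\mathbb{C}(m)$ precisely because $\mathbb{C}(m) \subseteq \mathbb{Z}$, so the subtraction and the divisibility statement make sense; the hypothesis $m > 1$ is not even needed for the equivalence-relation property itself (it merely guarantees the classes are nontrivial), though I would keep it in the statement for consistency with the rest of the paper.
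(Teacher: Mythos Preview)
Your proof is correct and follows essentially the same approach as the paper's: both verify reflexivity, symmetry, and transitivity directly by exhibiting the witnessing integers $k=0$, $-k$, and a sum or difference of the two given witnesses. The only cosmetic difference is that in the transitive step you add the two equalities to get $k+l$, whereas the paper writes the second hypothesis as $z-y=ml$ and obtains $k-l$; the arguments are otherwise identical.
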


\begin{proof}
It will be proved that $\equiv_{\mathbb{C}(m)}$, satisfies the reflexive, symmetric and transitive properties.
\begin{description}

\item[a.]\textbf{Reflexive:} Taking $x \in \mathbb{C}(m)$ arbitrary, we have that $0 = 0m = x - x$ therefore, $x \equiv_{\mathbb{C}(m)} x$

\item[b.]\textbf{Symmetry:} Let $x, y \in \mathbb{C}(m)$, be arbitrary, suppose $x \equiv_{ \mathbb{C}(m)} y$ equals $\exists k \in \mathbb{Z} \left( x - y = mk \right) $ multiplying by $-1$ gives that $\exists -k \in \mathbb{Z} \left( y - x = m(-k) \right)$ which equals $y \equiv_{\mathbb{C}(m)} x$

\item[c.]\textbf{Transitive:} Let $x, y , z \in \mathbb{C}(m)$, be arbitrary, suppose $x \equiv_{\mathbb{C}(m)} y$ and $y \equiv_{\mathbb{C}(m)} z$ this equals $\exists k \in \mathbb{Z} \left(x - y = mk\right)$ and $\exists l \in \mathbb{Z} \left (z - y = ml\right)$ Solving $-y$ from the second equation and substituting it in the first equation, we have that $x + ml - z = mk$ which is equivalent to $x - z = m(k-l ) $ that is, there exists $k' = k-l \in \mathbb{Z}$ such that $x - z = mk'$ which is equivalent to $ x\equiv_{\mathbb{C}(m)} z$

\end{description}
\end{proof}

\begin{theorem}\label{teo2a}
For each $m>1$, $\cdot$, is an operation on $\mathbb{C}(m)$, which is closure, modulative, associative and commutative.
\end{theorem}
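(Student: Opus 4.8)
The plan is to verify the four properties in turn, concentrating essentially all the effort on closure, since modularity, associativity and commutativity descend at once from the ambient structure on $\mathbb{Z}$ once closure is established.

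First I would prove closure. Observe that $0\notin\mathbb{C}(m)$, because $m\wedge 0 = m\neq 1$ for $m>1$; hence every element of $\mathbb{C}(m)$ is nonzero and the greatest common divisor property of Remark~\ref{remar2} is applicable. Now take $x,y\in\mathbb{C}(m)$, so $x\wedge m = 1$ and $y\wedge m = 1$, and I must show $(x\cdot y)\wedge m = 1$. By Remark~\ref{remar2} there exist $s,t\in\mathbb{Z}$ with $xs+mt=1$ and $u,v\in\mathbb{Z}$ with $yu+mv=1$. Multiplying these two identities and regrouping,
\[
1 = (xs+mt)(yu+mv) = (xy)(su) + m\bigl(xsv + tyu + mtv\bigr).
\]
Setting $s' = su$ and $t' = xsv + tyu + mtv$, both integers, we obtain $(xy)s' + mt' = 1$; since $xy\neq 0$, the converse implication in Remark~\ref{remar2} yields $(xy)\wedge m = 1$, i.e.\ $x\cdot y\in\mathbb{C}(m)$. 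Because ordinary multiplication is already a well-defined function $\mathbb{Z}\times\mathbb{Z}\to\mathbb{Z}$, this shows that its restriction is a genuine binary operation $\cdot\colon\mathbb{C}(m)\times\mathbb{C}(m)\to\mathbb{C}(m)$, as required in Definition~\ref{def5}.

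Next, modularity: $1\in\mathbb{C}(m)$ since $1\wedge m = 1$ for every $m$, and $1\cdot x = x\cdot 1 = x$ for all $x\in\mathbb{C}(m)$ because this equality already holds in $\mathbb{Z}$; so $1$ is a two-sided neutral element. Associativity and commutativity need no separate argument: for all $x,y,z\in\mathbb{C}(m)\subseteq\mathbb{Z}$ the identities $(x\cdot y)\cdot z = x\cdot(y\cdot z)$ and $x\cdot y = y\cdot x$ hold because they hold throughout $\mathbb{Z}$, and by the closure just proved every intermediate product stays inside $\mathbb{C}(m)$, so the identities make sense and remain valid in the restricted structure.

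The only step carrying real content is closure, and within it the one delicate point is merely the bookkeeping in expanding the product of the two Bézout relations; everything else is inheritance from $\mathbb{Z}$. An equivalent route to closure would be prime-by-prime, using that no prime divisor of $m$ divides $x$ or $y$, hence none divides $x\cdot y$; I would nonetheless keep the Bézout version above, since it is precisely the tool that Remark~\ref{remar2} makes available and that is reused elsewhere in the paper (for instance in the proof of Theorem~\ref{teo4}).
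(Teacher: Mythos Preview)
Your proof is correct and follows essentially the same route as the paper: the paper likewise proves only closure (stating that the remaining properties are inherited from $\mathbb{Z}$) by multiplying two B\'ezout identities for $x$ and $y$ and regrouping to exhibit a B\'ezout relation for $xy$ and $m$. Your additional remarks (that $0\notin\mathbb{C}(m)$, the explicit check that $1\in\mathbb{C}(m)$, and the alternative prime-by-prime argument) are sound extras that the paper omits.
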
 

\begin{proof}
We will prove the closure of the product since it is the least intuitive, the other properties are inherited from the product in $\mathbb{Z}$.\\ Let $x,y \in \mathbb{C}(m)$, be arbitrary, this equals
\begin{enumerate}
\item $x \wedge m = 1$, by definition.

\item $y \wedge m = 1$, by definition.

\item $\exists s_{1}, t_{1} \in \mathbb{Z} \left( s_{1}\cdot x + t_{1}\cdot m = 1\right)$, by property \ref{ecua1} over the line (1).

\item $\exists s_{2}, t_{2} \in \mathbb{Z} \left( s_{2}\cdot y + t_{2}\cdot m = 1\right)$, by property \ref{ecua1} over the line (2).

\item $\left( s_{1}\cdot x + t_{1}\cdot m \right)\cdot \left( s_{2}\cdot y + t_{2}\cdot m \right) = 1$, multiplying lines (3) and (4).

\item $ s_{1}\cdot s_{2}\cdot x\cdot y + \left( s_{1}\cdot t_{2}\cdot x+t_{1}\cdot s_{2}\cdot y +t_{1}\cdot t_{2}\cdot m \right)\cdot m = 1$, distributing and factoring.

\item $ s_{3}\cdot x\cdot y + t_{3}\cdot m = 1$ taking $s_{3}=s_{1}\cdot s_{2}$ and $t_{3}=s_ {1}\cdot t_{2}\cdot x+t_{1}\cdot s_{2}\cdot y+t_{1}\cdot t_{2}\cdot m$ from the previous line.

\item $x\cdot y \wedge m = 1$

\end{enumerate}

\end{proof}

\begin{theorem}\label{teo3a}
For each $m>1$, it is had that \\ 
$\forall x,y,u,v \in \mathbb{C}(m) \left(  x \equiv_{\mathbb{C}(m)} y \wedge u \equiv_{\mathbb{C}(m)} v \Rightarrow x \cdot u \equiv_{\mathbb{C}(m)} y\cdot v \right) $. 
\end{theorem}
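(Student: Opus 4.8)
The plan is to unwind the definition of $\equiv_{\mathbb{C}(m)}$ on both hypotheses, perform the standard ``add and subtract'' manipulation to express $x\cdot u - y\cdot v$ as an integer multiple of $m$, and then re-read the result through the definition. First I would fix arbitrary $x,y,u,v \in \mathbb{C}(m)$ and assume $x \equiv_{\mathbb{C}(m)} y$ and $u \equiv_{\mathbb{C}(m)} v$; by Definition \ref{def3} this gives integers $k,\ell \in \mathbb{Z}$ with $x - y = mk$ and $u - v = m\ell$.

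Next I would compute
\[
x\cdot u - y\cdot v = x\cdot u - y\cdot u + y\cdot u - y\cdot v = u\,(x - y) + y\,(u - v) = u\,(mk) + y\,(m\ell) = m\,(u k + y\ell).
\]
Setting $k' := uk + y\ell$, which is an integer since $u,y,k,\ell \in \mathbb{Z}$, we obtain $x\cdot u - y\cdot v = mk'$, and reading this back through Definition \ref{def3} yields exactly $x\cdot u \equiv_{\mathbb{C}(m)} y\cdot v$.

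The one point that needs a word of care — though it is not really an obstacle — is that the statement $x\cdot u \equiv_{\mathbb{C}(m)} y\cdot v$ only makes sense once we know $x\cdot u$ and $y\cdot v$ actually lie in $\mathbb{C}(m)$; this is guaranteed by the closure property of $\cdot$ on $\mathbb{C}(m)$ established in Theorem \ref{teo2a}. So I would invoke that closure at the start, and then the rest is the elementary factorization above, with no genuine difficulty remaining.
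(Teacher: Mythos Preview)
Your proof is correct and follows essentially the same approach as the paper: both arguments unwind Definition~\ref{def3} to obtain integer multiples of $m$ for $x-y$ and $u-v$, then combine them algebraically to exhibit $x\cdot u - y\cdot v$ as a multiple of $m$. The only cosmetic difference is that the paper substitutes $x = ms + y$ and $u = mt + v$ and expands the product, whereas you use the add-and-subtract telescoping $xu - yu + yu - yv$; your explicit remark that closure (Theorem~\ref{teo2a}) is needed for the conclusion to be well-posed is a nice touch the paper leaves implicit.
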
 

\begin{proof}
Let $x,y, u, v \in \mathbb{C}(m)$, be arbitrary, such that:
\begin{enumerate}
\item $x \equiv_{\mathbb{C}(m)} y $, by hypothesis.

\item $u \equiv_{\mathbb{C}(m)} v $, by hypothesis.

\item $\exists s \in \mathbb{Z} \left( x - y = m\cdot s\right) $, also $x \wedge m = 1$ and $y \wedge m = 1$ by definition of $\equiv_{\mathbb{C}(m)}$ in $\mathbb{C}(m)$ in line (1).

\item $\exists t \in \mathbb{Z} \left( u - v = m\cdot t \right) $, also $u \wedge m = 1$ and $v \wedge m = 1$ by definition of $\equiv_{\mathbb{C}(m)}$ in $\mathbb{C}(m)$ in line (2).

\item $\exists s \in \mathbb{Z} \left( x = m\cdot s + y\right)$, further $x \wedge m = 1$ and $y \wedge m = 1$ solving for $x $ in line (3).

\item $\exists t \in \mathbb{Z} \left( u = m\cdot t + v\right) $, further $u \wedge m = 1$ and $v \wedge m = 1$ solving for $u $ in line (4).

\item $x\cdot u = \left( m\cdot s + y \right)\cdot \left( m\cdot t + v \right)$, multiplying lines (5) and (6).

\item $ x\cdot u = m\cdot \left(m\cdot s\cdot t+s\cdot v +y\cdot t \right) + y\cdot v $, distributing and factoring.

\item $ x\cdot u - y\cdot v = m\cdot s'$, taking $s'=m\cdot s\cdot t+s\cdot v +y\cdot t$ from the previous line.

\end{enumerate}
Therefore, $x \cdot u \equiv_{\mathbb{C}(m)} y\cdot v$.
\end{proof}

\begin{theorem}\label{teo5a}
For each $m>1$, in $\left\langle \mathbb{C}(m)/\equiv_{\mathbb{C}(m)},\odot,\overline{1} \right\rangle $, it is had that $\overline{m-1}\odot\overline{m-1}=\overline{1}$.
\end{theorem}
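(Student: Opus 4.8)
The plan is a direct computation using the definition of $\odot$ and the compatibility of $\equiv_{\mathbb{C}(m)}$ with the product. First I would check that $\overline{m-1}$ is actually an element of the group, i.e. that $m-1 \in \mathbb{C}(m)$: since any common divisor of $m$ and $m-1$ divides their difference $m-(m-1)=1$, we get $m \wedge (m-1) = 1$, so $m-1 \in \mathbb{C}(m)$ and the class $\overline{m-1}$ is well defined.

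Next I would expand the square in $\mathbb{Z}$: $(m-1)\cdot(m-1) = m^2 - 2m + 1 = m(m-2) + 1$. Hence $(m-1)\cdot(m-1) - 1 = m(m-2)$, and since $m-2 \in \mathbb{Z}$, the definition of $\equiv_{\mathbb{C}(m)}$ (Definition \ref{def3}) gives $(m-1)\cdot(m-1) \equiv_{\mathbb{C}(m)} 1$, provided both sides lie in $\mathbb{C}(m)$; the left side does by the closure property (Theorem \ref{teo2}) applied to $m-1$ with itself, and $1 \in \mathbb{C}(m)$ trivially.

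Finally I would pass to the quotient: by Definition \ref{def6} of $\odot$ we have $\overline{m-1}\odot\overline{m-1} = \overline{(m-1)\cdot(m-1)}$, and since related elements have the same class (equivalently, by the compatibility of $\equiv_{\mathbb{C}(m)}$ with $\cdot$ established in Theorem \ref{teo3}), this class equals $\overline{1}$. This completes the argument. There is no real obstacle here: the only point requiring a moment's care is verifying that $m-1 \in \mathbb{C}(m)$ so that the statement is not vacuous, and that the congruence computation is carried out in $\mathbb{Z}$ before descending to the quotient.
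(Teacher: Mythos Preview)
Your proof is correct and follows essentially the same route as the paper: expand $(m-1)^2 = m(m-2)+1$, conclude the congruence to $1$ modulo $m$, and pass to the quotient via the definition of $\odot$. Your version is in fact slightly more careful, since you explicitly verify that $m-1\in\mathbb{C}(m)$ (and hence that $\overline{m-1}$ is a legitimate element of the group), a point the paper's proof leaves implicit.
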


\begin{proof}
Suppose $m>1$ then, $m-1>0$.
\begin{enumerate} 
\item $\overline{m-1}\odot\overline{m-1}=\overline{(m-1)\cdot (m-1)}$, by compatibility.

\item $\; \; \; \; \; \; \; \; \; \; \; \; \; \; \; \; \; \; \; \; \; \; \; \; =\overline{m^2 - 2m + 1}$, by distributive property.

\item $\; \; \; \; \; \; \; \; \; \; \; \; \; \; \; \; \; \; \; \; \; \; \; \; =\overline{m\cdot(m - 2) + 1}$, by factoring.

\item $m\cdot(m - 2) + 1\equiv_{\mathbb{C}(m)} 1$, then $m\cdot(m - 2)\equiv_{m} 0$.

\end{enumerate}
Since the classes are equal when their elements are related and viceversa, and by the transitivity of equality, we obtain $\overline{m-1}\odot \overline{m-1}=\overline{1}$
\end{proof} 

\begin{theorem}\label{teo6a}
For each $m>1$, it follows that $\left\langle \mathbb{C}(m)/\equiv_{\mathbb{C}(m)},\preceq \right\rangle $ is a poset. Also, $\preceq$ is total order.
\end{theorem}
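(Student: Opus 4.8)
The plan is to reduce the whole statement to the familiar total order on the finite set $\{1,2,\dots,m-1\}$, by showing that every class carries a canonical representative inside that set and that $\preceq$ is simply the pullback of $\le$ along the ``take the canonical representative'' map.

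First I would establish the key lemma: for each $\overline{x}\in\mathbb{C}(m)/\equiv_{\mathbb{C}(m)}$ there is \emph{exactly one} $r\in\{1,2,\dots,m-1\}$ with $r\equiv_{\mathbb{C}(m)}x$. For existence, write $x=qm+r$ with $0\le r<m$; then $x-r=mq$, so $r\equiv_{\mathbb{C}(m)}x$, and since $r=x-mq$ one has $r\wedge m=x\wedge m=1$; in particular $r\ne 0$ because $m>1$ would force $0\wedge m=m\ne 1$, hence $r\in\{1,\dots,m-1\}\cap\mathbb{C}(m)$. For uniqueness, if $r,r'\in\{1,\dots,m-1\}$ both satisfy $r\equiv_{\mathbb{C}(m)}x\equiv_{\mathbb{C}(m)}r'$, then $m\mid r-r'$ while $|r-r'|\le m-2<m$, which forces $r=r'$. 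I will write $\rho(\overline{x})$ for this unique representative, so that $\overline{x}=\overline{\rho(\overline{x})}$, and I note that $\rho$ is injective on classes: $\rho(\overline{x})=\rho(\overline{y})$ gives $\overline{x}=\overline{\rho(\overline{x})}=\overline{\rho(\overline{y})}=\overline{y}$.

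Next, using uniqueness, I would observe that Definition \ref{def7} collapses to the equivalence $\overline{x}\preceq\overline{y}\iff\rho(\overline{x})\le\rho(\overline{y})$: indeed any witnesses $s,t\in\{1,\dots,m-1\}$ with $s\equiv_{\mathbb{C}(m)}x$ and $t\equiv_{\mathbb{C}(m)}y$ must be $s=\rho(\overline{x})$ and $t=\rho(\overline{y})$, and conversely $s=\rho(\overline{x})$, $t=\rho(\overline{y})$ are legitimate witnesses precisely when $\rho(\overline{x})\le\rho(\overline{y})$. Thus $\preceq$ is the preimage under $\rho$ of the restriction of the usual order of $\mathbb{Z}$ to $\{1,\dots,m-1\}$.

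Finally I would transport the order axioms along $\rho$. Reflexivity of $\preceq$ comes from $\rho(\overline{x})\le\rho(\overline{x})$. Transitivity comes from transitivity of $\le$. Antisymmetry follows from antisymmetry of $\le$ together with the injectivity of $\rho$ on classes: if $\overline{x}\preceq\overline{y}$ and $\overline{y}\preceq\overline{x}$ then $\rho(\overline{x})\le\rho(\overline{y})\le\rho(\overline{x})$, so $\rho(\overline{x})=\rho(\overline{y})$ and hence $\overline{x}=\overline{y}$. Totality follows because $\le$ is total on $\mathbb{Z}$, so for any classes one of $\rho(\overline{x})\le\rho(\overline{y})$, $\rho(\overline{y})\le\rho(\overline{x})$ holds, giving $\overline{x}\preceq\overline{y}$ or $\overline{y}\preceq\overline{x}$. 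The only step carrying genuine content is the canonical-representative lemma of the first paragraph (existence plus uniqueness of the residue in $\{1,\dots,m-1\}$ that is coprime to $m$); once that is in place, the remaining verifications are a routine transfer of the order properties of $\langle\{1,\dots,m-1\},\le\rangle$.
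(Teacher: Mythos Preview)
Your proof is correct and follows essentially the same idea as the paper---reducing $\preceq$ to the usual order on $\{1,\dots,m-1\}$ via canonical representatives---though you make the existence and uniqueness of that representative explicit, whereas the paper only writes out the antisymmetry case and quietly reuses the same witnesses $s,t$ from both hypotheses without justifying that they must coincide. Your formulation via the map $\rho$ is cleaner and in fact closes that small gap in the paper's argument.
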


\begin{proof}
The reflexive, transitive and total being properties are simple, only the antisymmetry test is done. Let $\overline{x},\overline{y}\in \mathbb{C}(m)/\equiv_{\mathbb{C}(m)}$ such that:
\begin{enumerate}
\item $\overline{x} \preceq \overline{y}$, by hypothesis.

\item $\overline{y} \preceq \overline{x}$, by hypothesis.

\item $\exists s, t \in \left\lbrace 1,2,...,m-1\right\rbrace \left(s\leq t \wedge s\equiv_{\mathbb{C}(m) } x \wedge t\equiv_{\mathbb{C}(m)} y \right)$, is equivalent to line(1) by definition of $\preceq$.

\item $\exists s, t \in \left\lbrace 1,2,...,m-1\right\rbrace \left(t\leq s\wedge t\equiv_{\mathbb{C}(m) } y \wedge s \equiv_{\mathbb{C}(m)} x \right)$, is equivalent to line(2), by definition of $\preceq$.

\item $\exists s, t \in \left\lbrace 1,2,...,m-1\right\rbrace \left(t = s\wedge t\equiv_{\mathbb{C}(m)} y \wedge s \equiv_{\mathbb{C}(m)} x \right)$, by the antisymmetry of the usual order of $\leq$ on $\left\lbrace 1,2,...,m- 1\right\rbrace$.

\item $x \equiv_{\mathbb{C}(m)} y $, by transitivity of $\equiv_{\mathbb{C}(m)}$.
\end{enumerate}
Since the classes are equal when their elements are related and viceversa, $\overline{x} = \overline{y}$. Such that, $\preceq$ it is antisymmetric.
\end{proof}

\subsection*{Acknowledgments:}
To Gonzalo Medina Arellano for his support in LaTeX, and to my wife Elisabeth for her continued support in translating this document, I also want to thank Veritasium for their work in spreading scientific curiosities and mathematical conjectures that inspired me to tackle this challenge.
  
\bibliographystyle{amsplain}

\end{document}